\definecolor{lightgray}{gray}{0.9}
\newtheorem{theorem}{Theorem}
\newtheorem{definition}{Definition}
\newtheorem{lemma}{Lemma}[section]
\newtheorem{proposition}{Proposition}[section]
\newtheorem{remark}{Remark}[section]
\newtheorem{example}{Example}[section]
\numberwithin{equation}{section}
\begin{document}
\title {\textbf{Dirichlet Extremals for Discrete Plateau Problems in GT–Bézier Spaces via PSO}}
%\author []{}
\author[1]{Muhammad Ammad\thanks{Corresponding author: \url{21481199@life.hkbu.edu.hk}}}
\author[2]{Md Yushalify Misro\thanks{E-mail: \url{yushalify@usm.my}}}
\author[2]{Samia Bibi}
\author[2]{Ahmad Ramli}

\affil[1]{\small Department of Mathematics, Hong Kong Baptist University, Kowloon Tong, Hong Kong}
\affil[2]{\small School of Mathematical Sciences, Universiti Sains Malaysia, 11800 Gelugor, Pulau Pinang, Malaysia}
%\affil[3]{pending}
\date{}
\maketitle
\vspace{-0.5cm}

\begin{abstract}
We study a discrete analogue of the parametric Plateau problem in a non-polynomial tensor-product surface spaces generated by the generalized trigonometric (GT)--B\'ezier basis. Boundary interpolation is imposed by prescribing the boundary rows and columns of the control net, while the interior control points are selected by a Dirichlet principle: for each admissible choice of B\'ezier basis shape parameters, we compute the unique Dirichlet-energy extremal within the corresponding GT--B\'ezier patch space, which yields a parameter-dependent symmetric linear system for the interior control net under standard nondegeneracy assumptions. The remaining design freedom is thereby reduced to a four-parameter optimization problem, which we solve by particle swarm optimization. Numerical experiments show that the resulting two-level procedure consistently decreases the Dirichlet energy and, in our tests, often reduces the realized surface area relative to classical Bernstein--B\'ezier Dirichlet patches and representative quasi-harmonic and bending-energy constructions under identical boundary control data. We further adapt the same Dirichlet-extremal methodology to a hybrid tensor-product/bilinear Coons framework, obtaining minimality-biased TB--Coons patches from sparse boundary specifications.

\textbf{Keywords:}  GT-B\'ezier surfaces, Plateau-GT-B\'ezier problem, Particle Swarm Optimization, Harmonic B\'ezier surfaces, Minimal surfaces, TB--Coons surfaces
\end{abstract}

%%%%%%%%%%%%%%%%%%%%%%%%%%%%%%%%%%%%%%%%%%%%%%%%%%%%%%%%%%%%%%%%%
\section{Introduction}
\label{sec:intro}

Minimal surfaces arise as stationary points of the area functional under fixed boundary data and occupy a central role in geometric analysis and its applications; see, e.g., \cite{alarcon2021minimal,monterde2004bezier2,kapfer2011minimal,burton2003minimal,colding2011course6,dierkes2010minimal}. Their ubiquity in capillarity and membrane equilibria motivates the continued study \cite{traasdahl2011high4}, while modern engineering and design, including architecture, materials, marine structures and bioinspired morphogenesis, often exploit their geometric efficiency \cite{yoo2011computer5,pan2011construction7}. At smaller scales, periodic and bicontinuous configurations connect minimal surface theory to nanostructuring and molecular engineering \cite{wang2007periodic8}.

Let $D\subset\mathbb{R}^2$ be a bounded simply connected Lipschitz domain. Given a rectifiable Jordan curve $\Gamma\subset\mathbb{R}^3$ and a continuous weakly monotone parametrization $\gamma:\partial D\to \Gamma$, consider maps $S\in W^{1,2}(D;\mathbb{R}^3)$ with trace $S|_{\partial D}=\gamma$. Writing
\[
G_S \coloneqq (\nabla S)^\top \nabla S
\qquad\text{and}\qquad
J_S \coloneqq \sqrt{\det G_S},
\]
the (parametric) Plateau problem may be expressed as
\begin{equation}
\inf\Big\{\mathcal{A}[S] : S\in W^{1,2}(D;\mathbb{R}^3),\ S|_{\partial D}=\gamma\Big\},
\qquad
\mathcal{A}[S]\coloneqq \int_D J_S\,\mathrm{d}x .
\label{eq:plat_problem}
\end{equation}
For smooth immersions, criticality of $\mathcal{A}$ is equivalent to vanishing mean curvature. Direct minimization of $\mathcal{A}$ over parametrizations is analytically delicate 
due to reparametrization invariance and degenerating minimizing sequences. 
Following Douglas' harmonic relaxation \cite{douglas1931solution}, 
modern approaches to this problem include \cite{colding2011minimal} 
for theoretical aspects and \cite{crane2013digital} for computational 
implementations.
\begin{equation}
\mathcal{D}[S]\coloneqq \tfrac12 \int_D \operatorname{tr}(G_S)\,\mathrm{d}x
= \tfrac12 \int_D \big(\|\partial_1 S\|^2+\|\partial_2 S\|^2\big)\,\mathrm{d}x.
\label{eq:Dirichlet_energy}
\end{equation}
The pointwise arithmetic--geometric mean inequality yields
\[
J_S=\sqrt{\det G_S}\le \tfrac12 \operatorname{tr}(G_S)
\quad\text{a.e. on }D,
\qquad \text{hence}\qquad \mathcal{A}[S]\le \mathcal{D}[S].
\]
Moreover, equality holds if and only if $S$ is conformal (isothermal), i.e.
\[
\langle \partial_1 S,\partial_2 S\rangle = 0
\qquad\text{and}\qquad
\|\partial_1 S\|=\|\partial_2 S\|
\quad\text{a.e. on }D,
\]
so that Dirichlet minimizers in a conformal gauge recover minimal surfaces \cite{osserman2013survey}.

In geometric modeling one replaces the infinite-dimensional admissible class by a finite-dimensional ansatz. The classical ``Plateau--B\'ezier problem'' \cite{monterde2003Plateau,hao2012minimal} restricts $S$ to tensor-product B\'ezier patches with boundary control points fixed to interpolate prescribed curves. Although the inequality $\mathcal{A}\le \mathcal{D}$ does not, in general, yield a rigorous comparison of discrete minimizers (since conformality is not enforced within a fixed discrete space), Dirichlet extremals are known to provide reliable approximations of area extremals in many practical settings \cite{monterde2003Plateau,monterde2004bezier2}. Closely related discretizations employ finite elements and variational formulations \cite{barrett2007parametric}, multiresolution B-spline schemes leading to sparse linear systems for near-minimal surfaces \cite{hao2013approximationn15}, and analyses of Dirichlet-type energies in polyhedral settings \cite{polthier2002polyhedral12}; see also modern discrete differential geometry and geometry-processing treatments
connecting conformal parameterization, Laplacians, and energy minimization
\cite{crane2018discrete,mullen2008spectral,yueh2017efficient}. Additional strands include quasi-harmonic constructions in non-polynomial spaces \cite{hao2012minimal}, low-subdivision modeling \cite{pan2011constructionn27}, explicit low-degree polynomial minimal patches \cite{xu2010quinticn31,xu2008parametricn32}, geodesically constrained approximations \cite{li2022geodesic}, and canonical factor formulations for minimal surface equations \cite{kassabov2014transitionn19}. Variational area-reduction flows within parametric classes have also been explored \cite{ahmad2014variationaln1}.

In this work we adopt the GT-B\'ezier basis \cite{ammad2022novel}, a generalized trigonometric--B\'ezier system of degree $\ge 2$ enjoying partition-of-unity, nonnegativity, and symmetry properties analogous to the Bernstein basis, while introducing two shape parameters per univariate direction. Henceforth, we specialize in the parametric domain
$D=[0,1]^2$, and encode the boundary condition by fixing boundary control rows and columns to interpolate the prescribed boundary curves. Let
\[
\big\{G_{i,m}(\,\cdot\,;(\alpha_1,\alpha_2))\big\}_{i=0}^m,
\qquad
\big\{G_{j,n}(\,\cdot\,;(\beta_1,\beta_2))\big\}_{j=0}^n
\]
denote the GT-B\'ezier basis families in the $u$- and $v$-directions, respectively. A GT-B\'ezier tensor-product patch of bidegree $(m,n)$ is
\begin{equation}
S_{m,n}(u,v;\boldsymbol{\alpha},\mathbf{P})
\;=\;
\sum_{i=0}^m \sum_{j=0}^n
P_{i,j}\, G_{i,m}\big(u;(\alpha_1,\alpha_2)\big)\, G_{j,n}\big(v;(\beta_1,\beta_2)\big),
\qquad (u,v)\in[0,1]^2,
\label{eq:GT_surface}
\end{equation}
where $\mathbf{P}=\{P_{i,j}\}_{0\le i\le m,\ 0\le j\le n}\subset\mathbb{R}^3$ is the control net. The boundary control points $\{P_{i,j}\}$ with $i\in\{0,m\}$ or $j\in\{0,n\}$ are prescribed by the boundary interpolation constraints, whereas the interior control points
\[
\mathbf{p}\;\coloneqq\;\{P_{i,j}\}_{1\le i\le m-1,\ 1\le j\le n-1}\in\mathbb{R}^{3(m-1)(n-1)}
\]
remain unknown. The shape-parameter vector is
\begin{equation}
\boldsymbol{\alpha}\coloneqq(\alpha_1,\alpha_2,\beta_1,\beta_2)\in\mathcal{P}\subset\mathbb{R}^4,
\label{eq:alpha_def}
\end{equation}
where $\mathcal{P}$ is an admissible parameter set chosen so that the GT-B\'ezier basis is well-defined and retains its shape-preserving properties. In particular, letting $\Theta\subset\mathbb{R}^2$ denote the admissible univariate parameter set for the GT-B\'ezier basis (cf.\ Section~\ref{sec:def}), we assume
\begin{equation}
\mathcal{P}\subseteq \Theta\times \Theta .
\label{eq:P_subset_Theta}
\end{equation}
For each fixed $\boldsymbol{\alpha}\in\mathcal{P}$, consider the discrete admissible class
\[
\mathcal{S}_{m,n}(\boldsymbol{\alpha})
\;\coloneqq\;
\Big\{S_{m,n}(\cdot,\cdot;\boldsymbol{\alpha},\mathbf{P}) : \mathbf{P}\ \text{satisfies the boundary interpolation constraints}\Big\}.
\]
We define the discrete Dirichlet extremal (indeed, minimizer in the typical strictly convex case) by
\begin{equation}
S^\star(\boldsymbol{\alpha})
\in \arg\min\big\{\mathcal{D}[S] : S\in \mathcal{S}_{m,n}(\boldsymbol{\alpha})\big\}.
\label{eq:discrete_dirichlet_extremal}
\end{equation}
Taking variations supported on interior degrees of freedom yields the normal equations: for each interior index pair $(k,\ell)\in\{1,\dots,m-1\}\times\{1,\dots,n-1\}$ and each component $a\in\{1,2,3\}$, let
\[
\Phi_{k,\ell,a}(u,v)
\;\coloneqq\;
G_{k,m}\big(u;(\alpha_1,\alpha_2)\big)\,G_{\ell,n}\big(v;(\beta_1,\beta_2)\big)\,e_a,
\]
so that $\Phi_{k,\ell,a}|_{\partial([0,1]^2)}=0$. Then the first variation of $\mathcal{D}$ at $S^\star(\boldsymbol{\alpha})$ yields
\begin{equation}
\int_{[0,1]^2} \big\langle \nabla S^\star(\boldsymbol{\alpha}), \nabla \Phi_{k,\ell,a}\big\rangle\,\mathrm{d}u\,\mathrm{d}v \;=\; 0.
\label{eq:normal_equations}
\end{equation}
Equivalently, the interior control vector $\mathbf{p}=\mathbf{p}(\boldsymbol{\alpha})$ satisfies a parameter-dependent linear system
\begin{equation}
A(\boldsymbol{\alpha})\,\mathbf{p}(\boldsymbol{\alpha}) \;=\; \mathbf{b}(\boldsymbol{\alpha}),
\label{eq:param_linear_system}
\end{equation}
where $A(\boldsymbol{\alpha})$ and $\mathbf{b}(\boldsymbol{\alpha})$ are determined explicitly by the GT-B\'ezier basis functions and their derivatives and by the fixed boundary control data. On the admissible set $\mathcal{P}$ we assume $A(\boldsymbol{\alpha})$ is nonsingular, so that
\[
\mathbf{p}(\boldsymbol{\alpha}) \;=\; A(\boldsymbol{\alpha})^{-1}\mathbf{b}(\boldsymbol{\alpha})
\]
is well-defined.

We then introduce the reduced Dirichlet functional
\begin{equation}
\mathcal{J}:\mathcal{P}\to\mathbb{R},
\qquad
\mathcal{J}(\boldsymbol{\alpha})
\;\coloneqq\;
\mathcal{D}\big[S^\star(\boldsymbol{\alpha})\big]
\;=\;
\mathcal{D}\!\Big[S_{m,n}\big(\cdot,\cdot;\boldsymbol{\alpha},\mathbf{P}(\boldsymbol{\alpha})\big)\Big],
\label{eq:Dirichlet_functional}
\end{equation}
where $\mathbf{P}(\boldsymbol{\alpha})$ denotes the full control net obtained by combining the fixed boundary control points with the interior solution $\mathbf{p}(\boldsymbol{\alpha})$. We minimize $\mathcal{J}$ over $\mathcal{P}$ using particle swarm optimization (PSO), a widely used derivative-free metaheuristic; see, e.g.,
\cite{gad2022pso,shami2022particlesur}.
\[
\boldsymbol{\alpha}^\star \in \arg\min_{\boldsymbol{\alpha}\in\mathcal{P}} \mathcal{J}(\boldsymbol{\alpha}).
\]
In each evaluation of $\mathcal{J}$, the dependence on $\boldsymbol{\alpha}$ enters solely through the GT-B\'ezier basis (and its derivatives) and the solution of \eqref{eq:param_linear_system}; no \emph{a priori} numerical specialization of $\boldsymbol{\alpha}$ is required at the level of the variational formulation.

This construction retains the established strengths of Dirichlet extremals for Plateau--B\'ezier-type problems \cite{hao2020quasi,monterde2004bezier2,osserman2013survey} while using the additional geometric degrees of freedom afforded by the GT-B\'ezier basis \cite{ammad2022novel}. In practice, these shape parameters often permit lower Dirichlet energies than classical polynomial tensor-product B\'ezier patches under identical boundary data; empirically, this frequently correlates with reduced realized surface area. We further investigate the harmonic subclass of GT-B\'ezier patches, its dependence on boundary rows and columns, and its relationship to minimality in free-boundary regimes, connecting with harmonic parametrizations and discrete minimal surface literature \cite{lam2018discrete,ahmad2022computational,polthier2002polyhedral12,hao2012minimal}. As an application, we employ the same Dirichlet-extremal principle to infer interior control points for TB--Coons blends, producing minimality-biased GT--Coons surfaces from parsimonious control specifications.

Section~\ref{sec:def} recalls definitions and basic properties of classical and GT-B\'ezier surfaces. Section~\ref{sec:dirichlet-extremals} formulates the Dirichlet-extremal construction for GT-B\'ezier patches and the associated reduced functional. Section~\ref{sec:pso} presents the particle swarm optimization procedure used to minimize $\mathcal{J}$, together with an explicit algorithm. Section~\ref{sec:examples} reports numerical examples for the Plateau--GT-B\'ezier problem. Section~\ref{sec:harmonic} discusses the relationship between minimal surfaces and harmonic surfaces in our framework. Section~\ref{sec:coon} applies the Dirichlet-extremal method in the trigonometric basis to construct minimal TB--Coons surfaces. Finally, Section~\ref{sec:conclusion} summarizes conclusions and future directions.

%%%%%%%%%%%%%%%%%%%%%%%%%%%%%%%%%%%%%%%%%%%%%%%%%%%%%%%%%%%%%%%%%
\section{Preliminaries: GT-B\'ezier basis functions and shape-parameter effects}
\label{sec:def}

We recall the classical Bernstein basis and the generalized trigonometric (GT)-B\'ezier basis of Ammad et al.~\cite{ammad2022novel}, adopting notation consistent with Section~\ref{sec:intro}. Unless stated otherwise, the univariate parameter is $t\in[0,1]$. For the GT-B\'ezier basis we use a \emph{univariate} shape-parameter pair
\[
\boldsymbol{\theta}\coloneqq(\theta_1,\theta_2)\in \Theta \subset \mathbb{R}^2,
\qquad
\Theta \coloneqq [0.5,3.5]^2,
\]
thereby avoiding collision with the \emph{surface} shape vector
$\boldsymbol{\alpha}=(\alpha_1,\alpha_2,\beta_1,\beta_2)\in\mathcal{P}\subset\mathbb{R}^4$
introduced in Section~\ref{sec:intro}. In view of \eqref{eq:P_subset_Theta}, each tensor-product patch uses the two instantiated pairs
\[
\boldsymbol{\alpha}^{(u)}=(\alpha_1,\alpha_2)\in\Theta,
\qquad
\boldsymbol{\alpha}^{(v)}=(\beta_1,\beta_2)\in\Theta.
\]

\subsection{Bernstein polynomials and B\'ezier representation}

\begin{definition}[Bernstein polynomials {\cite[Ch.~1]{farouki2012bernstein,Phillips2003}}]\label{def:bernstein}
Let $d\in\mathbb{N}_0$. For each $i=0,\dots,d$, the $i$th Bernstein polynomial of degree $d$
on $[0,1]$ is defined by
\begin{equation}\label{eq:bernstein}
B_{i,d}(t):=\binom{d}{i} t^i(1-t)^{d-i}, \qquad t\in[0,1].
\end{equation}
The collection $\{B_{i,d}\}_{i=0}^d$ is called the Bernstein basis of degree $d$ on $[0,1]$.
\end{definition}

\begin{definition}[B\'ezier curve {\cite{farin2014curves}}]\label{def:bezier}
Let $c_0,\dots,c_d\in\mathbb{R}^m$ (typically $m\in\{2,3\}$) be control points.
The (parametric) B\'ezier curve of degree $d$ associated with $\{c_i\}_{i=0}^d$ is the map
$C:[0,1]\to\mathbb{R}^m$ given by
\begin{equation}\label{eq:bezier}
C(t):=\sum_{i=0}^d c_i\,B_{i,d}(t), \qquad t\in[0,1],
\end{equation}
where $B_{i,d}$ are the Bernstein polynomials from Definition~\ref{def:bernstein}.
\end{definition}

\subsection{GT-B\'ezier basis: base case and degree elevation}

\begin{definition}[Quadratic GT-B\'ezier (base case)]
\label{def:gt_seed}
Fix $\boldsymbol{\theta}=(\theta_1,\theta_2)\in\Theta$. The quadratic GT-B\'ezier basis on $[0,1]$ is defined by
\begin{equation}
\label{eq:gt2}
\begin{aligned}
G_{0,2}(t;\boldsymbol{\theta})
&= \frac{\theta_1}{2}\Big(\sin^2\!\big(\tfrac{\pi}{2}t\big)-\sin\!\big(\tfrac{\pi}{2}t\big)\Big)
    + \cos^2\!\big(\tfrac{\pi}{2}t\big),\\
G_{2,2}(t;\boldsymbol{\theta})
&= \frac{\theta_2}{2}\Big(\cos^2\!\big(\tfrac{\pi}{2}t\big)-\cos\!\big(\tfrac{\pi}{2}t\big)\Big)
    + \sin^2\!\big(\tfrac{\pi}{2}t\big),\\
G_{1,2}(t;\boldsymbol{\theta})
&= 1 - G_{0,2}(t;\boldsymbol{\theta}) - G_{2,2}(t;\boldsymbol{\theta}).
\end{aligned}
\end{equation}
\end{definition}

\begin{definition}[Recursive degree elevation (GT-B\'ezier basis of degree $n\ge 3$)]
\label{def:gt_rec}
For $n\ge 3$ and $k=0,\dots,n$, define
\begin{equation}
\label{eq:generalbasis}
G_{k,n}(t;\boldsymbol{\theta})
\;=\;
(1-t)\,G_{k,n-1}(t;\boldsymbol{\theta}) + t\,G_{k-1,n-1}(t;\boldsymbol{\theta}),
\end{equation}
with the convention $G_{k,n}\equiv 0$ whenever $k<0$ or $k>n$. The family
$\{G_{k,n}(\cdot;\boldsymbol{\theta})\}_{k=0}^n$
is called the GT-B\'ezier basis of degree $n$ associated with the seed
\eqref{eq:gt2}; cf.~\cite{ammad2022novel}.
\end{definition}

\begin{proposition}[Partition of unity and endpoint interpolation]
\label{prop:gt_pu_endpoints}
For each $n\ge 2$ and $\boldsymbol{\theta}\in\Theta$,
\[
\sum_{k=0}^n G_{k,n}(t;\boldsymbol{\theta}) = 1 \quad \text{for all } t\in[0,1],
\]
and
\[
G_{0,n}(0;\boldsymbol{\theta})=1,\qquad G_{n,n}(1;\boldsymbol{\theta})=1,\qquad
G_{k,n}(0;\boldsymbol{\theta})=G_{k,n}(1;\boldsymbol{\theta})=0\ \ (k\notin\{0,n\}).
\]
\end{proposition}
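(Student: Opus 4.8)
The plan is to establish both assertions at once by induction on the degree $n\ge 2$, using the quadratic seed \eqref{eq:gt2} of Definition~\ref{def:gt_seed} as the base case and the degree-elevation recursion \eqref{eq:generalbasis} of Definition~\ref{def:gt_rec} to drive the inductive step.

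For the base case $n=2$, partition of unity is immediate from the defining relation $G_{1,2}=1-G_{0,2}-G_{2,2}$ in \eqref{eq:gt2}. For the endpoint values I would substitute $t=0$ and $t=1$ into the closed forms, using $\sin(0)=0$, $\cos(0)=1$, $\sin(\tfrac{\pi}{2})=1$, $\cos(\tfrac{\pi}{2})=0$; this yields $G_{0,2}(0)=1$, $G_{2,2}(0)=0$, hence $G_{1,2}(0)=0$, and symmetrically $G_{0,2}(1)=0$, $G_{2,2}(1)=1$, $G_{1,2}(1)=0$. Note that the shape parameters $\theta_1,\theta_2$ drop out of these evaluations because each is multiplied by a factor ($\sin^2-\sin$ or $\cos^2-\cos$) that vanishes at both $t=0$ and $t=1$.

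For the inductive step, fix $n\ge 3$ and assume the claim for degree $n-1$. Summing \eqref{eq:generalbasis} over $k=0,\dots,n$ and reindexing, the convention $G_{k,n-1}\equiv 0$ for $k<0$ or $k>n-1$ makes both $\sum_{k=0}^n G_{k,n-1}(t)$ and $\sum_{k=0}^n G_{k-1,n-1}(t)$ equal to $\sum_{j=0}^{n-1}G_{j,n-1}(t)=1$, so $\sum_{k=0}^n G_{k,n}(t)=(1-t)\cdot 1+t\cdot 1=1$ for all $t\in[0,1]$. For the endpoints, evaluating \eqref{eq:generalbasis} at $t=0$ annihilates the $t\,G_{k-1,n-1}$ term and gives $G_{k,n}(0)=G_{k,n-1}(0)$, so the inductive hypothesis transfers the pattern verbatim, with $G_{n,n}(0)=G_{n,n-1}(0)=0$ by the zero convention; likewise, at $t=1$ the factor $(1-t)$ vanishes and $G_{k,n}(1)=G_{k-1,n-1}(1)$, which again reproduces the pattern, with $G_{n,n}(1)=G_{n-1,n-1}(1)=1$ and $G_{0,n}(1)=G_{-1,n-1}(1)=0$.

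I do not expect a substantive obstacle: this is a routine induction, and the only points deserving care are the bookkeeping of the out-of-range index convention in the degree-elevation sums and the observation that the seed's shape-parameter terms vanish at $t\in\{0,1\}$. The qualifier ``for all $t\in[0,1]$'' needs no separate continuity argument, since each $G_{k,n}$ is manifestly a finite linear combination of continuous functions.
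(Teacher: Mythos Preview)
Your proposal is correct and follows exactly the approach sketched in the paper's own proof: induction on $n$, with the seed \eqref{eq:gt2} furnishing the base case and the recursion \eqref{eq:generalbasis} driving both the partition-of-unity sum and the endpoint propagation. Your write-up is simply a more detailed realization of the paper's two-sentence outline; the only point worth tightening is that for $k=n-1$ at $t=0$ (and symmetrically $k=1$ at $t=1$) the needed value $G_{n-1,n-1}(0)=0$ is not literally among the endpoint clauses of the degree-$(n-1)$ hypothesis but follows at once from the partition-of-unity part of that same hypothesis.
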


\begin{proof}
The endpoint identities follow from the explicit seed \eqref{eq:gt2} and the recursion \eqref{eq:generalbasis} by induction on $n$.
The partition-of-unity property is verified for $n=2$ from \eqref{eq:gt2} and is propagated to $n\ge 3$ by summing \eqref{eq:generalbasis} over $k$ and using the induction hypothesis.
\end{proof}

\begin{remark}[Link to the surface parameters of Section~\ref{sec:intro}]
\label{rem:link_surface_params}
In the bivariate setting \eqref{eq:GT_surface}, we employ two independent GT families:
\[
\{G_{i,m}(\cdot;\boldsymbol{\alpha}^{(u)})\}_{i=0}^m \ \text{in the $u$-direction},
\qquad
\{G_{j,n}(\cdot;\boldsymbol{\alpha}^{(v)})\}_{j=0}^n \ \text{in the $v$-direction}.
\]
Thus the univariate shape-pair $\boldsymbol{\theta}$ used in this section is instantiated as
$\boldsymbol{\theta}=\boldsymbol{\alpha}^{(u)}$ (respectively $\boldsymbol{\alpha}^{(v)}$) when constructing the tensor-product patch.
\end{remark}

\subsection{Influence of Shape Parameters on GT-B\'ezier Curves}

\begin{definition}[Univariate GT-B\'ezier curve]
\label{def:gt-curve}
Given control points $\{b_k\}_{k=0}^n\subset\mathbb{R}^m$ ($m\in\{2,3\}$), degree $n\ge 2$, and the GT-B\'ezier basis
$\{G_{k,n}(\cdot;\boldsymbol{\theta})\}_{k=0}^n$,
the associated GT-B\'ezier curve is
\begin{equation}
\label{eq:gt-curve}
F(t;\boldsymbol{\theta}) \;=\; \sum_{k=0}^n G_{k,n}(t;\boldsymbol{\theta})\,b_k,
\qquad t\in[0,1].
\end{equation}
\end{definition}

\noindent\textbf{Curvature diagnostics.}
Curvature plots are a standard diagnostic for shape regularity in geometric design. Restricting to planar curves $F(\cdot;\boldsymbol{\theta})\subset\mathbb{R}^2$, suppose $F\in C^2([0,1];\mathbb{R}^2)$ is regular, i.e.\ $F'(t)\neq 0$ for all $t$. Its signed curvature is
\begin{equation}
\label{eq:curvature}
\kappa_{F(\cdot;\boldsymbol{\theta})}(t)
\;=\;
\frac{\det\!\big(F'(t;\boldsymbol{\theta}),\,F''(t;\boldsymbol{\theta})\big)}{\|F'(t;\boldsymbol{\theta})\|^3},
\qquad t\in[0,1].
\end{equation}
In particular, the dependence of the curvature on the shape parameters may be regarded as the nonlinear operator
\[
\mathscr{K}:\Theta\to L^\infty(0,1),
\qquad
\mathscr{K}(\boldsymbol{\theta}) \coloneqq \kappa_{F(\cdot;\boldsymbol{\theta})}(\cdot),
\]
whose nontriviality stems from the trigonometric seed \eqref{eq:gt2} and its propagation through the recursion \eqref{eq:generalbasis}. Empirically, varying $\boldsymbol{\theta}$ redistributes parametric stretch along $t$, thereby shifting the location and magnitude of curvature extrema while preserving endpoint interpolation (Proposition~\ref{prop:gt_pu_endpoints}) and the affine-combination structure of \eqref{eq:gt-curve}. When, additionally, nonnegativity holds for the chosen admissible set $\Theta$ (as in \cite{ammad2022novel}), the curve remains within the convex hull of its control polygon.

Figure~\ref{fig:ckc} illustrates representative cubic GT-B\'ezier curves under different parameter selections alongside their curvature plots, demonstrating modulation of curvature magnitude and its spatial allocation induced by the shape variables.

\begin{figure}[t]
\centering
\subfigure[Classical B\'ezier curve]{\includegraphics[width=1in]{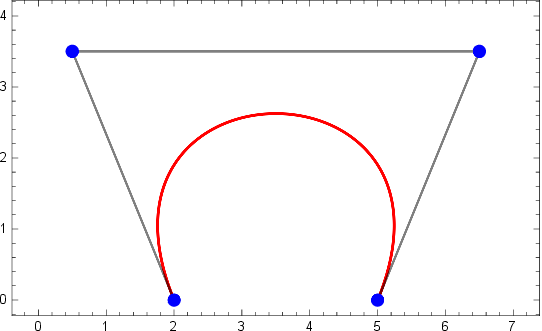}}
\subfigure[GT, $\theta_1=\theta_2=0.5$]{\includegraphics[width=1in]{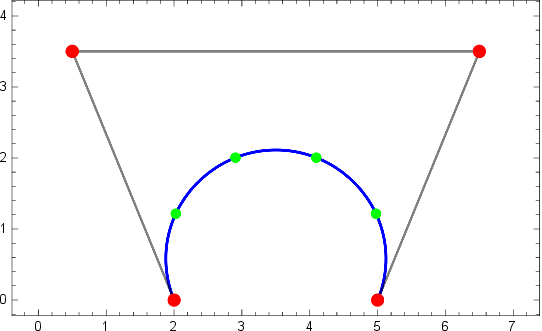}}
\subfigure[GT, $\theta_1=\theta_2=3.5$]{\includegraphics[width=1in]{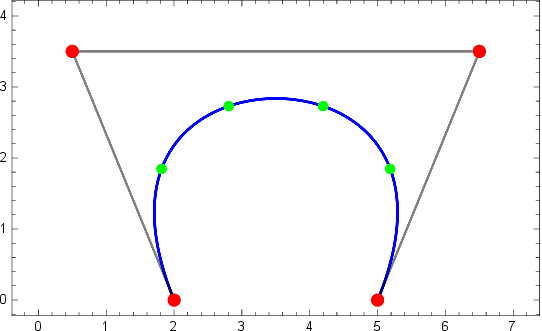}}
\subfigure[GT, $\theta_1=3.5,\ \theta_2=0.5$]{\includegraphics[width=1in]{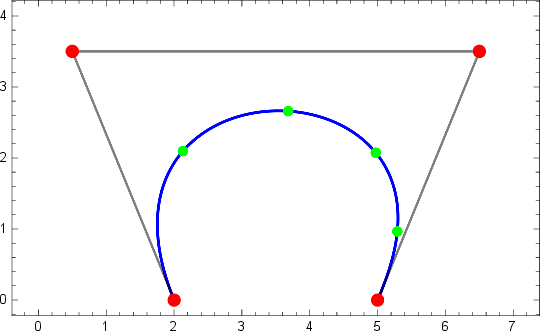}}
\subfigure[GT, $\theta_1=0.5,\ \theta_2=3.5$]{\includegraphics[width=1in]{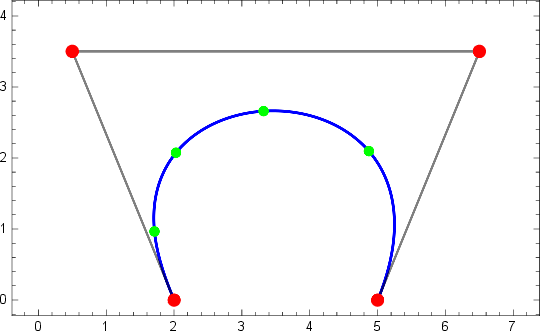}}\\
\subfigure[Curvature plot]{\includegraphics[width=1in]{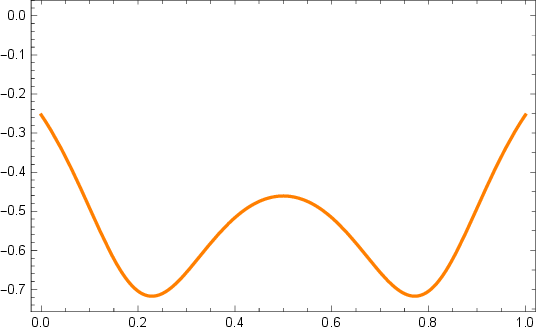}}
\subfigure[Curvature plot]{\includegraphics[width=1in]{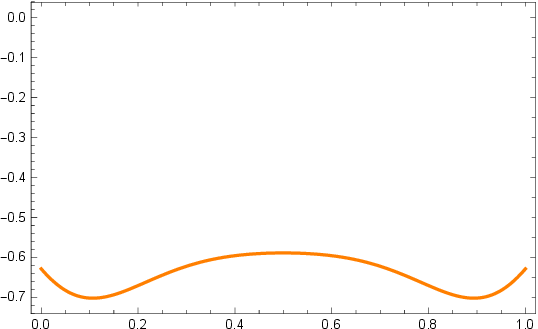}}
\subfigure[Curvature plot]{\includegraphics[width=1in]{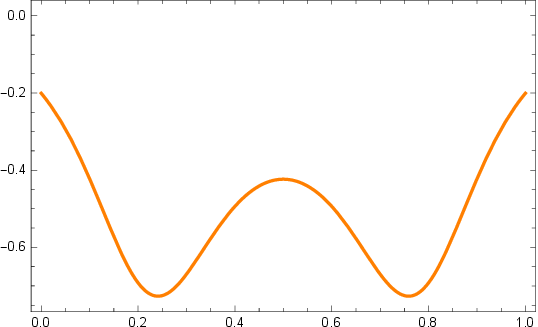}}
\subfigure[Curvature plot]{\includegraphics[width=1in]{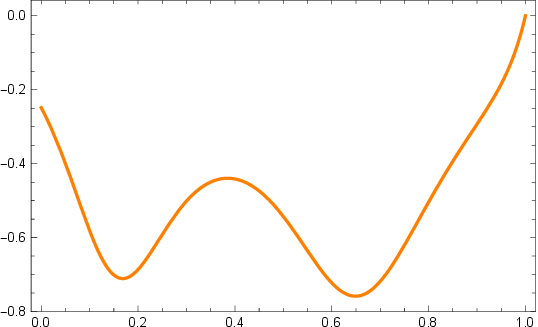}}
\subfigure[Curvature plot]{\includegraphics[width=1in]{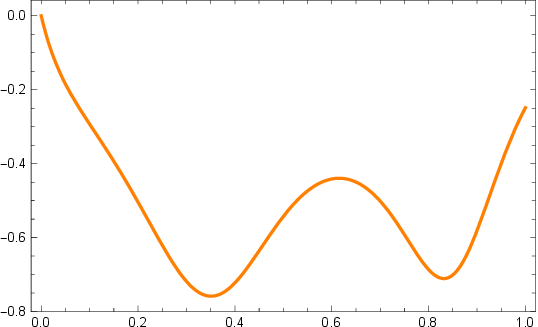}}
\caption{Cubic GT-B\'ezier curves and corresponding curvature plots under varying shape parameters.}
\label{fig:ckc}
\end{figure}

%%%%%%%%%%%%%%%%%%%%%%%%%%%%%%%%%%%%%%%%%%%%%%%%%%%%%%%%%%%%%%%%%
\section{Extremals of the Dirichlet functional for GT-B\'ezier surfaces}
\label{sec:dirichlet-extremals}

We work on $D=[0,1]^2$ and consider GT-B\'ezier  surfaces
$S_{m,n}(\cdot,\cdot;\boldsymbol{\alpha},\mathbf{P})$ with fixed boundary control rows and columns. In this section we express the condition that a GT-B\'ezier surface is an extremal of the Dirichlet functional (with prescribed boundary) as an explicit linear system in the interior control points. Rather than deriving the Euler--Lagrange equation in a strong form, we view the Dirichlet energy as a real-valued function on the Euclidean space of interior control variables and characterize its critical points by vanishing of the gradient. Closed-form recursive formulas for the derivatives of the GT-B\'ezier basis, used to evaluate the integrals appearing below, are provided in Appendix~\ref{app:derivative-recursion}.

\subsection{Tensor-product GT-B\'ezier surface and Dirichlet functional}

Let $m,n\ge 2$ and let $\boldsymbol{\alpha}=(\alpha_1,\alpha_2,\beta_1,\beta_2)\in\mathcal{P}$ be the surface shape vector. For notational convenience we set
\begin{equation}
\boldsymbol{\alpha}^{(u)}\coloneqq(\alpha_1,\alpha_2)\in\Theta,
\qquad
\boldsymbol{\alpha}^{(v)}\coloneqq(\beta_1,\beta_2)\in\Theta.
\label{eq:alpha_uv}
\end{equation}
We denote the corresponding GT-B\'ezier basis by $G_{i,m}(u;\boldsymbol{\alpha}^{(u)})$ and
$G_{j,n}(v;\boldsymbol{\alpha}^{(v)})$.
Given control points  $\{P_{ij}\}_{i,j=0}^{m,n}\subset\mathbb{R}^3$, define the GT-B\'ezier tensor-product surface
\begin{equation}
\label{eq:GT_surface_dirichlet}
S(u,v;\boldsymbol{\alpha},\mathbf{P})
\;=\;
\sum_{i=0}^{m}\sum_{j=0}^{n} P_{ij}\,G_{i,m}(u;\boldsymbol{\alpha}^{(u)})\,G_{j,n}(v;\boldsymbol{\alpha}^{(v)}),
\qquad (u,v)\in[0,1]^2 .
\end{equation}
Its Dirichlet functional is
\begin{equation}
\label{eq:dirichlet_energy}
\mathcal{D}[S]
\;\coloneqq\;
\frac12\int_{0}^{1}\!\!\int_{0}^{1}
\Big(\|S_u(u,v)\|^2+\|S_v(u,v)\|^2\Big)\,du\,dv .
\end{equation}

\subsection{Extremal condition as a linear system in the control points}

\begin{theorem}[Dirichlet extremals for GT-B\'ezier surfaces]
\label{thm:df}
Let $S$ be the GT-B\'ezier surface \eqref{eq:GT_surface_dirichlet} with prescribed boundary control points
$\{P_{ij}:\ i\in\{0,m\}\ \text{or}\ j\in\{0,n\}\}$.
Then $S$ is an extremal of the Dirichlet functional \eqref{eq:dirichlet_energy}
(with respect to variations of the interior control points) if and only if, for every
$k\in\{1,\dots,m-1\}$, $\ell\in\{1,\dots,n-1\}$, and each $\mu\in\{1,2,3\}$,
\begin{multline}
\label{eq:df_system}
\sum_{i=0}^{m-1}\sum_{j=0}^{n}
I_{k,i}^{(1)}\,J_{\ell,j}^{(1)}\,\big\langle e^\mu,\;P_{i+1,j}-P_{i,j}\big\rangle
\;+\;
\sum_{i=0}^{m-1}\sum_{j=0}^{n}
I_{k,i}^{(2)}\,J_{\ell,j}^{(1)}\,\big\langle e^\mu,\;P_{i,j}\big\rangle
\\
\;+\;
\sum_{i=0}^{m}\sum_{j=0}^{n-1}
I_{k,i}^{(3)}\,J_{\ell,j}^{(2)}\,\big\langle e^\mu,\;P_{i,j+1}-P_{i,j}\big\rangle
\;+\;
\sum_{i=0}^{m}\sum_{j=0}^{n-1}
I_{k,i}^{(3)}\,J_{\ell,j}^{(3)}\,\big\langle e^\mu,\;P_{i,j}\big\rangle
\;=\;0.
\end{multline}
Here $e^\mu$ denotes the $\mu$-th canonical basis vector of $\mathbb{R}^3$ and
$\langle\cdot,\cdot\rangle$ is the Euclidean inner product. The scalar coefficients are
\begin{align}
\label{eq:I_defs}
I_{k,i}^{(1)}
&\coloneqq \int_0^1 G'_{k,m}(u;\boldsymbol{\alpha}^{(u)})
\Big( G_{i,m-1}(u;\boldsymbol{\alpha}^{(u)}) + u\, G'_{i,m-1}(u;\boldsymbol{\alpha}^{(u)}) \Big)\,du,
\\
I_{k,i}^{(2)}
&\coloneqq \int_0^1 G'_{k,m}(u;\boldsymbol{\alpha}^{(u)})\,G'_{i,m-1}(u;\boldsymbol{\alpha}^{(u)})\,du,
\\
I_{k,i}^{(3)}
&\coloneqq \int_0^1 G_{k,m}(u;\boldsymbol{\alpha}^{(u)})\,G_{i,m}(u;\boldsymbol{\alpha}^{(u)})\,du,
\\[2mm]
\label{eq:J_defs}
J_{\ell,j}^{(1)}
&\coloneqq \int_0^1 G_{\ell,n}(v;\boldsymbol{\alpha}^{(v)})\,G_{j,n}(v;\boldsymbol{\alpha}^{(v)})\,dv,
\\
J_{\ell,j}^{(2)}
&\coloneqq \int_0^1 G'_{\ell,n}(v;\boldsymbol{\alpha}^{(v)})
\Big( G_{j,n-1}(v;\boldsymbol{\alpha}^{(v)}) + v\, G'_{j,n-1}(v;\boldsymbol{\alpha}^{(v)}) \Big)\,dv,
\\
J_{\ell,j}^{(3)}
&\coloneqq \int_0^1 G'_{\ell,n}(v;\boldsymbol{\alpha}^{(v)})\,G'_{j,n-1}(v;\boldsymbol{\alpha}^{(v)})\,dv.
\end{align}
\end{theorem}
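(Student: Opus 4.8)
The plan is to treat the Dirichlet energy \eqref{eq:dirichlet_energy} as a polynomial (in fact quadratic) function of the interior control vector $\mathbf{p}=\{P_{ij}\}_{1\le i\le m-1,\,1\le j\le n-1}\in\mathbb{R}^{3(m-1)(n-1)}$, with the boundary control points held fixed, and to characterize an extremal as a stationary point, i.e.\ a zero of the gradient. Writing $P^\mu_{ij}:=\langle e^\mu,P_{ij}\rangle$, the surface $S$ depends affinely on $\mathbf{p}$ with $\partial S/\partial P^\mu_{k\ell}=G_{k,m}(u;\boldsymbol{\alpha}^{(u)})\,G_{\ell,n}(v;\boldsymbol{\alpha}^{(v)})\,e^\mu$, a perturbation that vanishes on $\partial([0,1]^2)$ by Proposition~\ref{prop:gt_pu_endpoints} and hence is an admissible variation within $\mathcal{S}_{m,n}(\boldsymbol{\alpha})$. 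Differentiating \eqref{eq:dirichlet_energy} term by term (trivially valid, since the energy is a finite sum of the $\langle P_{ij},P_{i'j'}\rangle$ with fixed scalar weights) and cancelling the prefactor $\tfrac12$ against the chain-rule factor $2$, I obtain, for each interior pair $(k,\ell)$ and each $\mu\in\{1,2,3\}$,
\[
\frac{\partial\mathcal{D}}{\partial P^\mu_{k\ell}}
=\int_0^1\!\!\int_0^1\Big(\big\langle S_u,\,G'_{k,m}(u)\,G_{\ell,n}(v)\,e^\mu\big\rangle
+\big\langle S_v,\,G_{k,m}(u)\,G'_{\ell,n}(v)\,e^\mu\big\rangle\Big)\,du\,dv .
\]
Thus $S$ is an extremal if and only if all of these vanish, and the task reduces to rewriting them in the form \eqref{eq:df_system}.

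The one non-routine ingredient is a de Casteljau--type derivative identity for the GT-B\'ezier basis, which I would extract from the recursion \eqref{eq:generalbasis} (extended to the base degree as in Appendix~\ref{app:derivative-recursion}). Applying \eqref{eq:generalbasis} once and reindexing the shifted sum, using the convention $G_{k,n}\equiv0$ for $k\notin\{0,\dots,n\}$, gives the one-step reduction
\[
\sum_{i=0}^{m}P_i\,G_{i,m}(u)=\sum_{i=0}^{m-1}\big(P_i+u\,(P_{i+1}-P_i)\big)\,G_{i,m-1}(u),
\]
and differentiating in $u$ then yields
\[
\frac{d}{du}\sum_{i=0}^{m}P_i\,G_{i,m}(u)
=\sum_{i=0}^{m-1}(P_{i+1}-P_i)\big(G_{i,m-1}(u)+u\,G'_{i,m-1}(u)\big)
+\sum_{i=0}^{m-1}P_i\,G'_{i,m-1}(u),
\]
together with the identical identity in the $v$-variable. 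I would apply the $u$-identity slicewise (fixing $j$) to rewrite $S_u$, and the $v$-identity slicewise (fixing $i$) to rewrite $S_v$, leaving the differentiated test-function factors $G'_{k,m}$ and $G'_{\ell,n}$ at the original degrees $m$ and $n$.

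Substituting these expansions into the gradient formula and using that the integral over $[0,1]^2$ separates into a $u$-integral times a $v$-integral, the first inner product contributes $\sum_{j=0}^{n}J^{(1)}_{\ell,j}\big(\sum_{i=0}^{m-1}I^{(1)}_{k,i}\,\langle e^\mu,P_{i+1,j}-P_{i,j}\rangle+\sum_{i=0}^{m-1}I^{(2)}_{k,i}\,\langle e^\mu,P_{i,j}\rangle\big)$ and the second contributes $\sum_{i=0}^{m}I^{(3)}_{k,i}\big(\sum_{j=0}^{n-1}J^{(2)}_{\ell,j}\,\langle e^\mu,P_{i,j+1}-P_{i,j}\rangle+\sum_{j=0}^{n-1}J^{(3)}_{\ell,j}\,\langle e^\mu,P_{i,j}\rangle\big)$, where $I^{(1)},I^{(2)},I^{(3)},J^{(1)},J^{(2)},J^{(3)}$ are exactly the scalar integrals \eqref{eq:I_defs}--\eqref{eq:J_defs}. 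Adding the two contributions reproduces precisely the left-hand side of \eqref{eq:df_system}, so its vanishing for all $(k,\ell,\mu)$ is equivalent to $S$ being an extremal, which is the assertion. The main obstacle will be bookkeeping rather than analysis: the reindexing $\sum_i P_i\,G_{i-1,m-1}=\sum_i P_{i+1}\,G_{i,m-1}$, the consistent separation of the differentiated factor carried by the test function (degree $m$ or $n$) from the once-lowered factors carried by $S_u,S_v$ (degree $m-1$ or $n-1$), and the resulting asymmetric summation ranges ($0\le i\le m-1,\ 0\le j\le n$ in the first two sums versus $0\le i\le m,\ 0\le j\le n-1$ in the last two). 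Finally, one may note that $\mathcal{D}$ is a convex quadratic in $\mathbf{p}$, so under the nonsingularity of $A(\boldsymbol{\alpha})$ assumed in Section~\ref{sec:intro} the extremal is in fact the unique minimizer, although only the stationarity characterization is needed here.
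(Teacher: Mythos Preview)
Your proposal is correct and follows essentially the same route as the paper: compute $\partial\mathcal{D}/\partial P^\mu_{k\ell}$ via the first-variation identity, rewrite $S_u$ and $S_v$ in the difference form obtained from the degree-elevation recursion, and separate the double integrals into the products $I^{(r)}_{k,i}J^{(s)}_{\ell,j}$. Your derivation of the difference-form derivative (apply \eqref{eq:generalbasis} to the whole sum and then differentiate) is a minor reorganization of the paper's Lemma~\ref{lem:curve_difference_form} (differentiate \eqref{eq:generalbasis} first, then reindex), but the resulting identity and the remainder of the argument coincide.
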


\begin{proof}
Write each control point as $P_{ij}=(x_{ij}^1,x_{ij}^2,x_{ij}^3)\in\mathbb{R}^3$.
For fixed $(k,\ell)$ with $1\le k\le m-1$, $1\le \ell\le n-1$ and $\mu\in\{1,2,3\}$, consider the partial derivative
$\partial \mathcal{D}[S]/\partial x_{k\ell}^\mu$.
Using the standard identity (e.g., \cite{monterde2004bezier2})
\[
\frac{\partial \mathcal{D}[S]}{\partial x_{k\ell}^\mu}
=
\int_{0}^{1}\!\!\int_{0}^{1}
\left(
\left\langle S_u,\frac{\partial S_u}{\partial x_{k\ell}^\mu}\right\rangle
+
\left\langle S_v,\frac{\partial S_v}{\partial x_{k\ell}^\mu}\right\rangle
\right)\,du\,dv,
\]
it remains to compute $S_u$, $S_v$, and the variations $\partial S_u/\partial x_{k\ell}^\mu$,
$\partial S_v/\partial x_{k\ell}^\mu$.

From \eqref{eq:GT_surface_dirichlet},
\[
\frac{\partial S}{\partial x_{k\ell}^\mu}(u,v)
=
G_{k,m}(u;\boldsymbol{\alpha}^{(u)})\,G_{\ell,n}(v;\boldsymbol{\alpha}^{(v)})\,e^\mu,
\]
hence by commuting partial derivatives,
\[
\frac{\partial S_u}{\partial x_{k\ell}^\mu}(u,v)
=
G'_{k,m}(u;\boldsymbol{\alpha}^{(u)})\,G_{\ell,n}(v;\boldsymbol{\alpha}^{(v)})\,e^\mu,
\qquad
\frac{\partial S_v}{\partial x_{k\ell}^\mu}(u,v)
=
G_{k,m}(u;\boldsymbol{\alpha}^{(u)})\,G'_{\ell,n}(v;\boldsymbol{\alpha}^{(v)})\,e^\mu.
\]

Next, we use the derivative representation of the GT-B\'ezier surface (cf.\ the formulas collected in
Appendix~\ref{app:derivative-recursion}) to write
\begin{align*}
S_u(u,v)
&=
\sum_{i=0}^{m-1}\sum_{j=0}^{n}
\Big( G_{i,m-1}(u;\boldsymbol{\alpha}^{(u)}) + u\,G'_{i,m-1}(u;\boldsymbol{\alpha}^{(u)}) \Big)
G_{j,n}(v;\boldsymbol{\alpha}^{(v)})\,(P_{i+1,j}-P_{i,j})
\\
&\quad+
\sum_{i=0}^{m-1}\sum_{j=0}^{n}
G'_{i,m-1}(u;\boldsymbol{\alpha}^{(u)})\,G_{j,n}(v;\boldsymbol{\alpha}^{(v)})\,P_{i,j},
\\[1mm]
S_v(u,v)
&=
\sum_{i=0}^{m}\sum_{j=0}^{n-1}
G_{i,m}(u;\boldsymbol{\alpha}^{(u)})
\Big( G_{j,n-1}(v;\boldsymbol{\alpha}^{(v)}) + v\,G'_{j,n-1}(v;\boldsymbol{\alpha}^{(v)}) \Big)\,(P_{i,j+1}-P_{i,j})
\\
&\quad+
\sum_{i=0}^{m}\sum_{j=0}^{n-1}
G_{i,m}(u;\boldsymbol{\alpha}^{(u)})\,G'_{j,n-1}(v;\boldsymbol{\alpha}^{(v)})\,P_{i,j}.
\end{align*}

Substituting these expressions into the gradient formula and using bilinearity of $\langle\cdot,\cdot\rangle$ gives
\begin{align*}
\frac{\partial \mathcal{D}[S]}{\partial x_{k\ell}^\mu}
&=
\sum_{i=0}^{m-1}\sum_{j=0}^{n}
\big\langle e^\mu,P_{i+1,j}-P_{i,j}\big\rangle
\int_0^1\!\!\int_0^1
G'_{k,m}(u;\boldsymbol{\alpha}^{(u)})
\Big( G_{i,m-1}(u;\boldsymbol{\alpha}^{(u)}) + u\,G'_{i,m-1}(u;\boldsymbol{\alpha}^{(u)}) \Big)
\\[-1mm]
&\hspace{44mm}\times
G_{\ell,n}(v;\boldsymbol{\alpha}^{(v)})\,G_{j,n}(v;\boldsymbol{\alpha}^{(v)})\,du\,dv
\\
&\quad+
\sum_{i=0}^{m-1}\sum_{j=0}^{n}
\big\langle e^\mu,P_{i,j}\big\rangle
\int_0^1\!\!\int_0^1
G'_{k,m}(u;\boldsymbol{\alpha}^{(u)})\,G'_{i,m-1}(u;\boldsymbol{\alpha}^{(u)})
\,G_{\ell,n}(v;\boldsymbol{\alpha}^{(v)})\,G_{j,n}(v;\boldsymbol{\alpha}^{(v)})\,du\,dv
\\
&\quad+
\sum_{i=0}^{m}\sum_{j=0}^{n-1}
\big\langle e^\mu,P_{i,j+1}-P_{i,j}\big\rangle
\int_0^1\!\!\int_0^1
G_{k,m}(u;\boldsymbol{\alpha}^{(u)})\,G_{i,m}(u;\boldsymbol{\alpha}^{(u)})
\\[-1mm]
&\hspace{44mm}\times
G'_{\ell,n}(v;\boldsymbol{\alpha}^{(v)})
\Big( G_{j,n-1}(v;\boldsymbol{\alpha}^{(v)}) + v\,G'_{j,n-1}(v;\boldsymbol{\alpha}^{(v)}) \Big)\,du\,dv
\\
&\quad+
\sum_{i=0}^{m}\sum_{j=0}^{n-1}
\big\langle e^\mu,P_{i,j}\big\rangle
\int_0^1\!\!\int_0^1
G_{k,m}(u;\boldsymbol{\alpha}^{(u)})\,G_{i,m}(u;\boldsymbol{\alpha}^{(u)})\,
G'_{\ell,n}(v;\boldsymbol{\alpha}^{(v)})\,G'_{j,n-1}(v;\boldsymbol{\alpha}^{(v)})\,du\,dv.
\end{align*}
Finally, each double integral separates into a product of a $u$-integral and a $v$-integral, yielding precisely the coefficients
\eqref{eq:I_defs}--\eqref{eq:J_defs}. Therefore
$\partial \mathcal{D}[S]/\partial x_{k\ell}^\mu=0$ for all interior indices $(k,\ell)$ and components $\mu$
is equivalent to \eqref{eq:df_system}, completing the proof.
\end{proof}

Equation \eqref{eq:df_system} is the coordinate form of the weak normal equations
\eqref{eq:normal_equations} from Section~\ref{sec:intro} restricted to the GT-B\'ezier tensor-product space.
Upon ordering the interior control variables into the vector $\mathbf p\in\mathbb R^{3(m-1)(n-1)}$,
the collection of equations \eqref{eq:df_system} assembles into the linear system
$A(\boldsymbol{\alpha})\mathbf p=\mathbf b(\boldsymbol{\alpha})$ in \eqref{eq:param_linear_system},
where the entries of $A(\boldsymbol{\alpha})$ and $\mathbf b(\boldsymbol{\alpha})$ are determined by the coefficients
\eqref{eq:I_defs}--\eqref{eq:J_defs} and by the prescribed boundary control data.

By Theorem~\ref{thm:df}, prescribing the boundary control points and taking the interior control points
$\{P_{k\ell}\}_{k,\ell}^{m-1, n-1}$ as unknowns yields a square linear system
\eqref{eq:df_system}. Whenever the associated coefficient matrix is nonsingular (equivalently, the stiffness matrix is positive definite on the interior degrees of freedom), this system has a unique solution, hence a unique Dirichlet extremal in the chosen GT-B\'ezier tensor-product space.

We now connect this Dirichlet-based discretization with the geometric goal of minimizing area.
For smooth charts, the Dirichlet energy dominates the area functional and agrees with it exactly for
conformal (in particular, isothermal) parametrizations. The next theorem shows that, assuming the
existence of an isothermal area-minimizing chart and sufficiently dense boundary sampling, the areas of the resulting GT--Dirichlet extremals admit an asymptotic upper bound in terms of the minimal area.

\begin{theorem}[Asymptotic upper bound for the area of GT--Dirichlet extremals]
\label{thm:area_convergence}
Let $S:[0,1]^2\to\mathbb R^3$ be a $C^2$ immersion which is an \emph{isothermal} parametrization.
For each $d\in\mathbb N$, let $Y_d$ be the GT--B\'ezier surface of degree $(d,d)$
(with fixed shape parameters $\boldsymbol\alpha\in\mathcal P$)
which minimizes the Dirichlet functional among all GT--B\'ezier surfaces whose boundary control
points coincide with the sampled boundary net
\[
P_d \;=\;\Bigl\{S\!\left(\frac{i}{d},\frac{j}{d}\right)\Bigr\}_{i,j=0}^d
\qquad\text{on }\partial\{0,\dots,d\}^2.
\]
Assume moreover that there exists a sequence of GT--B\'ezier surfaces $X_d$ of degree $(d,d)$ with the same
boundary data $P_d$ such that $X_d\to S$ in $C^1([0,1]^2)$ as $d\to\infty$.
Then
\[
\lim_{d\to\infty}\mathcal A[X_d]=\mathcal A[S]
\qquad\text{and}\qquad
\limsup_{d\to\infty}\mathcal A[Y_d]\le \mathcal A[S],
\]
where $\mathcal A[\cdot]$ denotes the area functional.
\end{theorem}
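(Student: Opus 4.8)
The plan is to prove the two assertions separately. The first is a pure continuity statement about the area functional under $C^1$ convergence, and the second reduces to a four-term chain of inequalities resting on the variational optimality of $Y_d$, the pointwise arithmetic--geometric mean bound $\mathcal A\le\mathcal D$ recalled in Section~\ref{sec:intro}, and the conformality of $S$. No deep new ingredient is needed beyond what is already in the excerpt.

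\emph{Step 1: continuity of the area and Dirichlet functionals under $C^1$ convergence.} First I would note that for $X\in C^1([0,1]^2;\mathbb R^3)$ both integrands, $J_X=\sqrt{\det((\nabla X)^\top\nabla X)}$ and $\tfrac12(\|X_u\|^2+\|X_v\|^2)$, are fixed continuous functions of the pair $(X_u,X_v)\in\mathbb R^3\times\mathbb R^3$. Since $[0,1]^2$ is compact and $X_d\to S$ in $C^1$, the first derivatives converge uniformly and their values stay in a fixed compact set for $d$ large; composing with the (uniformly continuous, on that set) integrand maps gives uniform convergence $J_{X_d}\to J_S$ and $\operatorname{tr}(G_{X_d})\to\operatorname{tr}(G_S)$ on $[0,1]^2$. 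Integrating yields $\mathcal A[X_d]\to\mathcal A[S]$ and $\mathcal D[X_d]\to\mathcal D[S]$; in particular $\lim_{d\to\infty}\mathcal A[X_d]=\mathcal A[S]$, which is the first assertion.

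\emph{Step 2: the $\limsup$ bound for $Y_d$.} Here I would chain four elementary facts. (i) By the pointwise AM--GM inequality, $\mathcal A[Y_d]\le\mathcal D[Y_d]$, valid since the GT--B\'ezier surface $Y_d$ is smooth on $[0,1]^2$. (ii) Because $X_d$ is itself a GT--B\'ezier surface of degree $(d,d)$ whose boundary control points coincide with $P_d$, it is an admissible competitor in the minimization defining $Y_d$; hence $\mathcal D[Y_d]\le\mathcal D[X_d]$. It is essential here that the comparison is between surfaces with the \emph{same boundary control net}, not merely the same boundary curves. (iii) By Step~1, $\mathcal D[X_d]\to\mathcal D[S]$. (iv) Since $S$ is isothermal it is conformal, so equality holds in AM--GM pointwise and $\mathcal D[S]=\mathcal A[S]$. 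Combining, $\limsup_{d\to\infty}\mathcal A[Y_d]\le\limsup_{d\to\infty}\mathcal D[Y_d]\le\lim_{d\to\infty}\mathcal D[X_d]=\mathcal D[S]=\mathcal A[S]$, the second assertion.

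\emph{On well-definedness of $Y_d$ and the main obstacle.} I would add a brief remark that $Y_d$ exists: for fixed $\boldsymbol\alpha$ and fixed boundary control data, $\mathcal D$ restricted to the control net is a nonnegative quadratic function of the interior control variables, hence a convex quadratic bounded below, which therefore attains its infimum; if the interior stiffness matrix is only positive semidefinite the minimizer is non-unique, but the energy value $\mathcal D[Y_d]$---all that Step~2 uses---is unambiguous. As for difficulty, the argument above is essentially bookkeeping once the hypotheses are granted; the genuine content has been pushed into the standing assumption that an approximating sequence $X_d\to S$ in $C^1$ exists with the prescribed sampled boundary control nets $P_d$. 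Establishing such a density statement for the GT--B\'ezier spaces---an analogue of degree-elevation/Bernstein approximation adapted to the trigonometric seed \eqref{eq:gt2} and the recursion \eqref{eq:generalbasis}---is the only step requiring real work; within the present theorem it is legitimately taken as given, and I would cite or defer it.
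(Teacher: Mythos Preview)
Your proposal is correct and follows essentially the same route as the paper's own proof: both arguments chain the AM--GM bound $\mathcal A\le\mathcal D$, the Dirichlet-minimality of $Y_d$ against the admissible competitor $X_d$, the $C^1$-continuity of $\mathcal D$ giving $\mathcal D[X_d]\to\mathcal D[S]$, and the isothermal equality $\mathcal D[S]=\mathcal A[S]$. Your additional remarks on the existence of $Y_d$ and on where the real content lies (the assumed $C^1$ approximation by $X_d$) are apt and go slightly beyond the paper, but the core argument is identical.
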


\begin{proof}
We use three standard ingredients.

\medskip
\noindent\textbf{(i) Area is bounded by Dirichlet energy.}
For any $C^1$ map $F:[0,1]^2\to\mathbb R^3$, the arithmetic--geometric mean inequality yields
\[
\|F_u\|^2+\|F_v\|^2 \;\ge\; 2\|F_u\|\,\|F_v\|
\;\ge\; 2\|F_u\times F_v\|.
\]
Integrating gives
\begin{equation}
\label{eq:area_le_dirichlet}
\mathcal A[F]=\int_{[0,1]^2}\|F_u\times F_v\|
\;\le\;
\frac12\int_{[0,1]^2}(\|F_u\|^2+\|F_v\|^2)=\mathcal D[F].
\end{equation}

\medskip
\noindent\textbf{(ii) Equality for isothermal parametrizations.}
Since $S$ is isothermal, we have $\langle S_u,S_v\rangle=0$ and $\|S_u\|=\|S_v\|$ on $[0,1]^2$. Hence
$\|S_u\times S_v\|=\|S_u\|\,\|S_v\|=\|S_u\|^2
=\frac12(\|S_u\|^2+\|S_v\|^2)$, and therefore
\begin{equation}
\label{eq:area_eq_dirichlet_for_S}
\mathcal A[S]=\mathcal D[S].
\end{equation}

\medskip
\noindent\textbf{(iii) Minimality of $Y_d$ for the Dirichlet energy under the discrete boundary constraint.}
By construction, $Y_d$ minimizes $\mathcal D[\cdot]$ over the affine subspace
of GT charts with boundary control points fixed by $P_d$. In particular, for any admissible competitor
$F$ in the same GT--B\'ezier space with the same boundary data,
\begin{equation}
\label{eq:dir_min_property}
\mathcal D[Y_d]\le \mathcal D[F].
\end{equation}

\medskip
Now fix $d$ and take as competitor $F=X_d$, which is admissible by hypothesis (same boundary net $P_d$). Combining
\eqref{eq:area_le_dirichlet} and \eqref{eq:dir_min_property} yields
\[
\mathcal A[Y_d]\le \mathcal D[Y_d]\le \mathcal D[X_d].
\]
Since $X_d\to S$ in $C^1([0,1]^2)$, we have $X_{d,u}\to S_u$ and $X_{d,v}\to S_v$ uniformly; hence
$\mathcal D[X_d]\to \mathcal D[S]$. Using \eqref{eq:area_eq_dirichlet_for_S}, we conclude
\[
\limsup_{d\to\infty}\mathcal A[Y_d]\le \lim_{d\to\infty}\mathcal D[X_d]=\mathcal D[S]=\mathcal A[S].
\]
Finally, $\mathcal A[X_d]\to \mathcal A[S]$ follows directly from the $C^1$ convergence and continuity of the
area integrand $\|F_u\times F_v\|$ under uniform convergence of $F_u,F_v$.
\end{proof}
\begin{remark}
The conclusion is stated as a $\limsup$ because the discrete boundary constraint fixes only a sampled boundary net,
so $Y_d$ need not span the same Jordan curve $\Gamma$ for each $d$. If one enforces exact boundary interpolation
$Y_d|_{\partial D}=S|_{\partial D}$ (or assumes an appropriate stability result under boundary perturbations),
the $\limsup$ can be strengthened to a full limit.
\end{remark}
%%%%%%%%%%%%%%%%%%%%%%%%%%%%%%%%%%%%%%%%%%%%%
\section{Optimization of the reduced Dirichlet functional via Particle Swarm Optimization}
\label{sec:pso}

We consider the reduced Dirichlet functional
$\mathcal{J}(\boldsymbol{\alpha})=\mathcal{D}[S^\star(\boldsymbol{\alpha})]$,
where $S^\star(\boldsymbol{\alpha})$ is the GT--Dirichlet extremal obtained by solving the
interior-control linear system \eqref{eq:param_linear_system} for fixed
$\boldsymbol{\alpha}=(\alpha_1,\alpha_2,\beta_1,\beta_2)\in\mathcal{P}$. In this section we describe the numerical strategy used to minimize $\mathcal{J}$ over
the admissible parameter set $\mathcal{P}$, namely particle swarm optimization (PSO)
\cite{nayak202325,wang2018pso}.

Let $\mathcal{P}\subseteq\Theta\times\Theta$ be the admissible set for the GT-B\'ezier basis shape parameters
(cf.\ \eqref{eq:alpha_def}--\eqref{eq:P_subset_Theta}). The optimization problem was solved in the numerical experiments are
\begin{equation}
\label{eq:pso_problem}
\boldsymbol{\alpha}^\star
\in \arg\min_{\boldsymbol{\alpha}\in\mathcal{P}} \mathcal{J}(\boldsymbol{\alpha}),
\qquad
\mathcal{J}(\boldsymbol{\alpha})
\coloneqq
\mathcal{D}\!\left[S^\star(\boldsymbol{\alpha})\right],
\end{equation}
where, for each $\boldsymbol{\alpha}\in\mathcal{P}$, the surface $S^\star(\boldsymbol{\alpha})$ is
the unique Dirichlet extremal in the GT-B\'ezier tensor-product space with the prescribed boundary control
net, obtained by solving
\begin{equation}
\label{eq:pso_inner_system}
A(\boldsymbol{\alpha})\,\mathbf{p}(\boldsymbol{\alpha})=\mathbf{b}(\boldsymbol{\alpha}).
\end{equation}

\subsection{Particle Swarm Optimization (PSO)}
PSO is a population-based stochastic metaheuristic inspired by collective motion such as bird
flocking \cite{jain2022pso,zhang2015pso}. It maintains a set (swarm) of $N$ particles,
each representing a candidate position vector
\[
x_i(t)\in\mathbb{R}^D,\qquad D=4,
\]
where we identify $x_i=(\alpha_1,\alpha_2,\beta_1,\beta_2)$ and enforce the bound constraints
$x_i(t)\in\mathcal{P}$ (e.g.\ via clamping or projection). At iteration $t$, each particle has a
velocity $v_i(t)\in\mathbb{R}^D$, a personal best position $x_i^b(t)$ (the best position it has
visited so far), and the swarm shares a global best position $x_g^b(t)$ (the best across all
particles). With i.i.d.\ random vectors $r_1(t),r_2(t)\in[0,1]^D$, the global-best PSO update is
\begin{equation}
\label{eq:Pso}
\begin{cases}
v_i(t) = w\,v_i(t-1)
+ c_1\,r_1(t)\odot\big(x_g^{b}(t-1)-x_i(t-1)\big)
+ c_2\,r_2(t)\odot\big(x_i^{b}(t-1)-x_i(t-1)\big),\\[1mm]
x_i(t) = x_i(t-1) + v_i(t),
\end{cases}
\end{equation}
where $\odot$ denotes componentwise multiplication. The inertia weight $w$ controls momentum
(exploration vs.\ exploitation), and $c_1,c_2>0$ are acceleration (learning) parameters weighting
the global (social) and personal (cognitive) terms, respectively. In our implementation, after the
position update we enforce feasibility $x_i(t)\in\mathcal{P}$ and then re-evaluate the fitness
$f(x_i(t))\coloneqq \mathcal{J}(x_i(t))$.

\paragraph{Local-best variant.}
A common alternative replaces the global best $x_g^b$ by a neighborhood best $x_i^b$ computed over
a prescribed neighborhood of particle $i$. This can enhance exploration and reduce premature
convergence, at the cost of slower convergence rates. In this work we use the global-best variant
\eqref{eq:Pso} unless otherwise stated.

In our setting the PSO objective is the reduced Dirichlet functional \eqref{eq:pso_problem}, i.e.
\begin{equation}
\label{eq:psoobj}
\min_{\boldsymbol{\alpha}\in\mathcal{P}} f(\boldsymbol{\alpha})
\;=\;
\min_{\boldsymbol{\alpha}\in\mathcal{P}} \mathcal{J}(\boldsymbol{\alpha})
\;=\;
\min_{\boldsymbol{\alpha}\in\mathcal{P}}
\frac12\int_{[0,1]^2}\Big(\|S_u^\star(\boldsymbol{\alpha})\|^2+\|S_v^\star(\boldsymbol{\alpha})\|^2\Big)\,du\,dv.
\end{equation}
Equivalently, one may write $S^\star(\boldsymbol{\alpha}) =
S_{m,n}(\cdot,\cdot;\boldsymbol{\alpha},\mathbf{P}(\boldsymbol{\alpha}))$ with
$\mathbf{P}(\boldsymbol{\alpha})$ obtained from \eqref{eq:pso_inner_system}. Algorithm~\ref{alg:pso} summarizes the PSO procedure used in the numerical experiments.

\begin{algorithm}[b!]
\caption{Particle Swarm Optimization (PSO) for minimizing $\mathcal{J}(\boldsymbol{\alpha})$}
\label{alg:pso}
\begin{algorithmic}[1]
\Require Objective $f(\boldsymbol{\alpha})=\mathcal{J}(\boldsymbol{\alpha})$, admissible set $\mathcal{P}$,
dimension $D=4$, swarm size $N$, max iterations $T_{\max}$, parameters $w,c_1,c_2$
\Ensure Best parameters $\boldsymbol{\alpha}_g^b$ and value $f(\boldsymbol{\alpha}_g^b)$
\State Initialize particles $\boldsymbol{\alpha}_i(0)\in\mathcal{P}$ (e.g.\ uniform in bounds), $i=1,\dots,N$
\State Initialize velocities $v_i(0)$ (e.g.\ uniform in a symmetric box), $i=1,\dots,N$
\State Set personal bests $\boldsymbol{\alpha}_i^b(0)=\boldsymbol{\alpha}_i(0)$ and evaluate $f(\boldsymbol{\alpha}_i^b(0))$
\State Set global best $\boldsymbol{\alpha}_g^b(0)\in\arg\min_i f(\boldsymbol{\alpha}_i^b(0))$
\For{$t=1$ to $T_{\max}$}
  \For{$i=1$ to $N$}
    \State Draw $r_1,r_2\sim U([0,1]^D)$
    \State Update velocity:
    $v_i(t)=w\,v_i(t-1)+c_1\,r_1\odot(\boldsymbol{\alpha}_g^b(t-1)-\boldsymbol{\alpha}_i(t-1))
    +c_2\,r_2\odot(\boldsymbol{\alpha}_i^b(t-1)-\boldsymbol{\alpha}_i(t-1))$
    \State Update position: $\boldsymbol{\alpha}_i(t)=\boldsymbol{\alpha}_i(t-1)+v_i(t)$
    \State Enforce bounds: project/clamp $\boldsymbol{\alpha}_i(t)$ onto $\mathcal{P}$
    \State Evaluate fitness $f(\boldsymbol{\alpha}_i(t))$
    \If{$f(\boldsymbol{\alpha}_i(t)) < f(\boldsymbol{\alpha}_i^b(t-1))$}
      \State $\boldsymbol{\alpha}_i^b(t)=\boldsymbol{\alpha}_i(t)$
      \If{$f(\boldsymbol{\alpha}_i^b(t)) < f(\boldsymbol{\alpha}_g^b(t-1))$}
        \State $\boldsymbol{\alpha}_g^b(t)=\boldsymbol{\alpha}_i^b(t)$
      \EndIf
    \Else
      \State $\boldsymbol{\alpha}_i^b(t)=\boldsymbol{\alpha}_i^b(t-1)$
    \EndIf
  \EndFor
\EndFor
\State \Return $\boldsymbol{\alpha}_g^b(T_{\max})$, $f(\boldsymbol{\alpha}_g^b(T_{\max}))$
\end{algorithmic}
\end{algorithm}

Unless otherwise specified, we use a swarm size $N=50$, inertia weight $w=0.7$, and acceleration
coefficients $c_1=c_2=1.5$. Each particle is initialized as a 4-vector
$\boldsymbol{\alpha}_i(0)=(\alpha_1,\alpha_2,\beta_1,\beta_2)\in\mathcal{P}$ (uniformly at random
within the bounds of $\mathcal{P}$), and velocities are initialized uniformly in a symmetric range
proportional to the parameter box size. The global-best variant \eqref{eq:Pso} is employed.

%%%%%%%%%%%%%%%%%%%%%%%%%%%%%%%%%%%%%%%%%%%%%%%%%%%%%%%%%%%%%%%%%
\section{Numerical examples: Plateau problem for GT--B\'ezier patches via Dirichlet extremals}
\label{sec:examples}

In this section we report numerical experiments for the (discrete) Plateau problem in the class of
GT--B\'ezier tensor-product patches. Throughout, the boundary control net is prescribed, and the
unknown interior control points are computed as the \emph{Dirichlet extremal} in the chosen finite-dimensional
GT--B\'ezier space (Section~\ref{sec:dirichlet-extremals}, cf.\ \eqref{eq:param_linear_system}). In addition, the shape-parameter
vector $\boldsymbol{\alpha}=(\alpha_1,\alpha_2,\beta_1,\beta_2)\in\mathcal{P}$ is selected by minimizing the
\emph{reduced} Dirichlet energy
\[
\mathcal{J}(\boldsymbol{\alpha})
\;=\;
\mathcal{D}\!\left[S^\star(\boldsymbol{\alpha})\right],
\qquad
\boldsymbol{\alpha}^\star \in \arg\min_{\boldsymbol{\alpha}\in\mathcal{P}} \mathcal{J}(\boldsymbol{\alpha}),
\]
where $S^\star(\boldsymbol{\alpha})$ denotes the GT--Dirichlet extremal surface for the fixed boundary data at
parameter $\boldsymbol{\alpha}$ (Section~\ref{sec:pso}). Hence each evaluation $\mathcal{J}(\boldsymbol{\alpha})$
requires solving the interior linear system and then computing $\mathcal{D}$ (numerically) on $[0,1]^2$.

For comparison we also construct: (i) the classical Bernstein--B\'ezier Dirichlet patch
\cite{monterde2004bezier2}, (ii) the quasi-harmonic surface \cite{hao2012minimal}, and (iii) the bending-energy
surface \cite{miao2005bezier}. All surfaces share the same boundary control points. We visualize the resulting
patches and, in addition, we display mean-curvature nephograms. For a smooth minimal surface the mean curvature
$H$ vanishes identically; in discrete or constrained settings one typically expects $H$ to be small rather than
exactly zero \cite{monterde2003Plateau}.

\begin{example}[Bicubic boundary net I]
\label{ex:quaddirichlet}
Let $\{P_{ij}\}_{i,j=0}^3\subset\mathbb{R}^3$ be a $4\times 4$ bicubic control net whose boundary points are fixed as
\[
\begin{aligned}
&P_{0,0}=(0,0,0),\quad P_{0,1}=(2,0,2),\quad P_{0,2}=(4,0,-2),\quad P_{0,3}=(6,0,0),\\
&P_{1,0}=(0,2,2),\quad P_{1,3}=(6,2,2),\\
&P_{2,0}=(0,4,-2),\quad P_{2,3}=(6,4,-2),\\
&P_{3,0}=(0,6,2),\quad P_{3,1}=(2,6,2),\quad P_{3,2}=(4,6,-2),\quad P_{3,3}=(6,6,0).
\end{aligned}
\]
For each admissible shape-parameter vector $\boldsymbol{\alpha}=(\alpha_1,\alpha_2,\beta_1,\beta_2)\in\mathcal{P}$,
let $S^\star(\cdot,\cdot;\boldsymbol{\alpha})$ denote the GT--B\'ezier patch whose boundary control points are
as above and whose interior control points are chosen as the Dirichlet extremal, i.e.\ obtained from
\eqref{eq:param_linear_system}. The numerical Plateau problem in this GT--B\'ezier class is then the reduced optimization
problem
\[
\boldsymbol{\alpha}^\star\in\arg\min_{\boldsymbol{\alpha}\in\mathcal{P}}
\mathcal{J}(\boldsymbol{\alpha})
=
\arg\min_{\boldsymbol{\alpha}\in\mathcal{P}}
\mathcal{D}\!\left[S^\star(\boldsymbol{\alpha})\right].
\]
We solve this problem by PSO as described in Section~\ref{sec:pso}. The resulting surface (with the optimized
shape parameters) is shown in Figure~\ref{fig:dirichletquad}(a). The competing surfaces obtained by the
Bernstein--B\'ezier Dirichlet method, the quasi-harmonic method, and the bending-energy method are shown in
Figure~\ref{fig:dirichletquad}(b)--(d). The PSO convergence histories (10 independent runs) are shown in
Figure~\ref{fig:areavsiterquad}(a).
\end{example}

\begin{figure}[htb]
\begin{center}
\subfigure[Extremal Dirichlet method in the GT class (PSO-optimized $\boldsymbol{\alpha}$).]{
\includegraphics[width=2.5in]{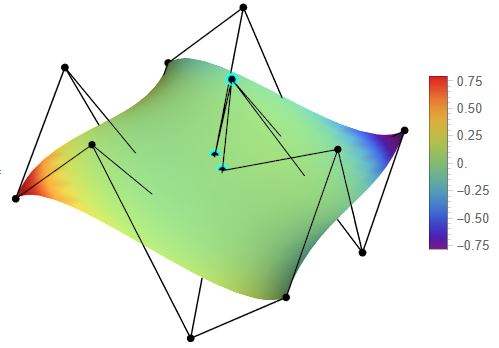}}
\subfigure[Classical Bernstein--B\'ezier Dirichlet method \cite{monterde2004bezier2}.]{
\includegraphics[width=2.5in]{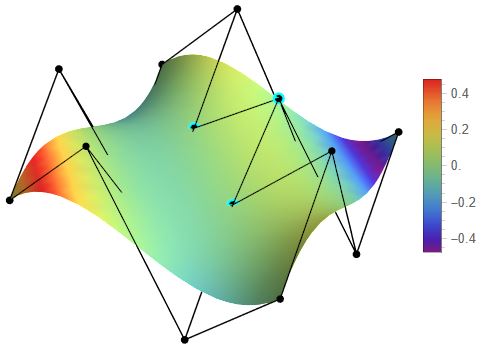}}\\
\subfigure[Quasi-harmonic method \cite{hao2012minimal}.]{
\includegraphics[width=2.5in]{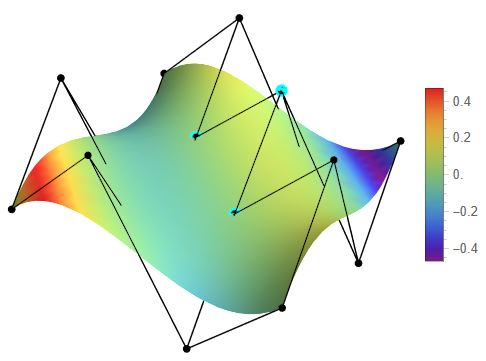}}
\subfigure[Bending-energy method \cite{miao2005bezier}.]{
\includegraphics[width=2.5in]{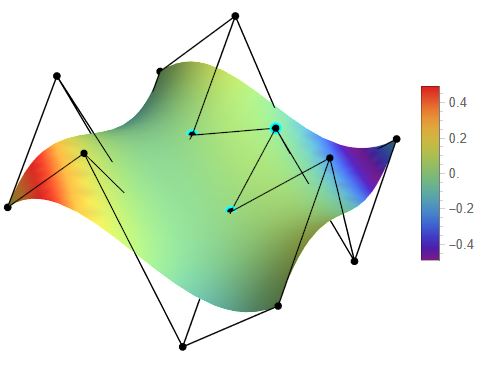}}
\caption{Bicubic surfaces constructed from the same boundary control points by different approaches.}
\label{fig:dirichletquad}
\end{center}
\end{figure}
\begin{example}[Bicubic boundary net II]
\label{ex:dirichletcubic}
Let the boundary control points of a bicubic net $\{P_{ij}\}_{i,j=0}^3$ be prescribed by
\[
\begin{aligned}
&P_{0,0}=(0,0,0),\quad P_{0,1}=(2,0,2),\quad P_{0,2}=(4,0,2),\quad P_{0,3}=(6,0,0),\\
&P_{1,0}=(0,2,2),\quad P_{1,3}=(6,2,2),\\
&P_{2,0}=(0,4,2),\quad P_{2,3}=(6,4,2),\\
&P_{3,0}=(0,6,2),\quad P_{3,1}=(2,6,2),\quad P_{3,2}=(4,6,2),\quad P_{3,3}=(6,6,0).
\end{aligned}
\]
As in Example~\ref{ex:quaddirichlet}, for each $\boldsymbol{\alpha}\in\mathcal{P}$ we compute the interior control
points as the GT--Dirichlet extremal and minimize the reduced functional $\mathcal{J}(\boldsymbol{\alpha})$ by PSO.
The resulting GT surface is displayed in Figure~\ref{fig:dirichletcubic}(a), together with the comparison methods
in Figure~\ref{fig:dirichletcubic}(b)--(d). The convergence histories for 10 runs are shown in
Figure~\ref{fig:areavsiterquad}(b). In particular, the proposed method yields a surface with small mean curvature
magnitudes (nearly harmonic behavior) while achieving the lowest surface area/Dirichlet energy among the tested
methods for this boundary configuration.
\end{example}

\begin{figure}[t!]
\begin{center}
\subfigure[Extremal Dirichlet method in the GT class (PSO-optimized $\boldsymbol{\alpha}$).]{
\includegraphics[width=2.7in]{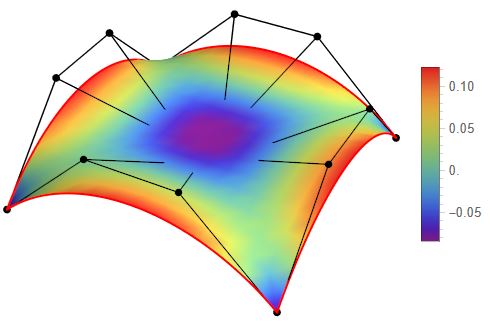}}
\subfigure[Classical Bernstein--B\'ezier Dirichlet method \cite{monterde2004bezier2}.]{
\includegraphics[width=2.7in]{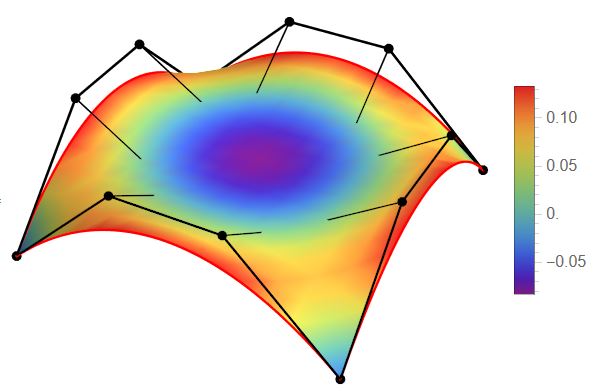}}\\
\subfigure[Quasi-harmonic method \cite{hao2012minimal}.]{
\includegraphics[width=2.7in]{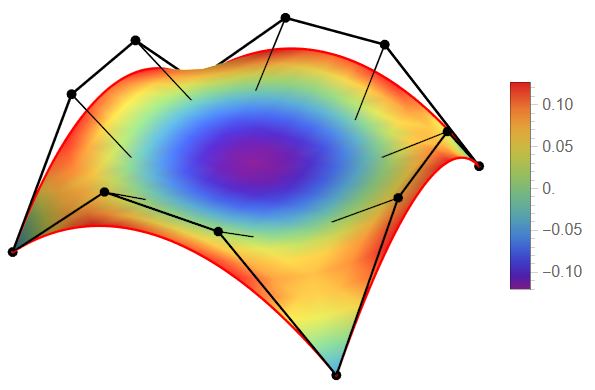}}
\subfigure[Bending-energy method \cite{miao2005bezier}.]{
\includegraphics[width=2.7in]{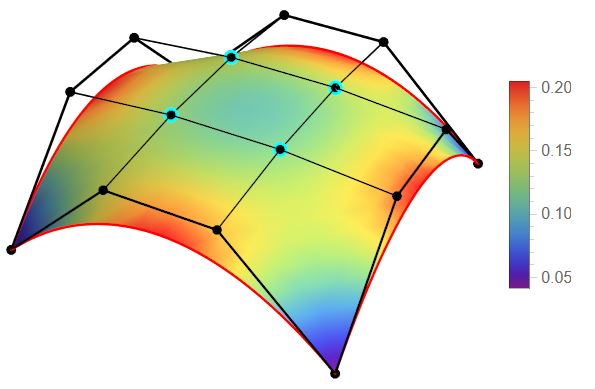}}
\caption{Bicubic surfaces constructed from the same boundary control points by different approaches.}
\label{fig:dirichletcubic}
\end{center}
\end{figure}

\begin{figure}[t!]
\begin{center}
\subfigure[]{\includegraphics[width=2.7in]{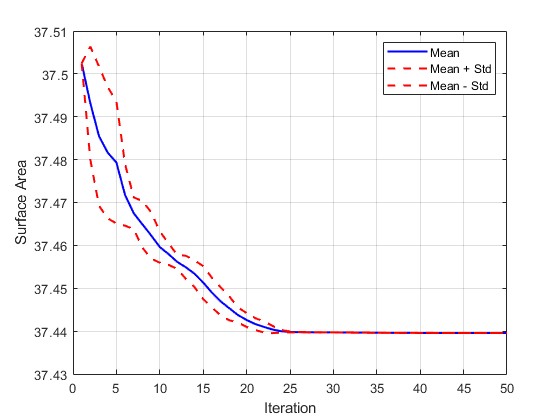}}
\subfigure[]{\includegraphics[width=2.7in]{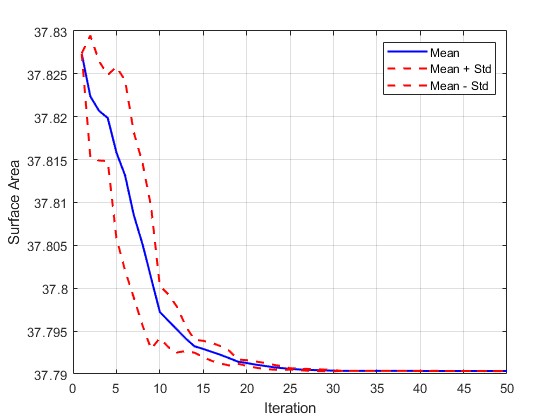}}
\caption{PSO convergence histories for minimizing $\mathcal{J}(\boldsymbol{\alpha})$ (10 runs):
(a) Example~\ref{ex:quaddirichlet}; (b) Example~\ref{ex:dirichletcubic}.}
\label{fig:areavsiterquad}
\end{center}
\end{figure}

To quantify the comparison, Table~\ref{tab:one} reports the attained surface areas (computed from the resulting
parametric surfaces) together with the optimized shape parameters for the proposed method. In both examples the
PSO-optimized GT--Dirichlet extremal yields the smallest area among the tested approaches, indicating that the
added shape degrees of freedom (optimized at the energy level) effectively improve the quality of the discrete
Plateau solution within the chosen finite-dimensional ansatz space.

\begin{table}[t!]
\centering
\caption{Surface-area comparison for Examples~\ref{ex:quaddirichlet} and~\ref{ex:dirichletcubic}.}
\vspace{0.5em}
\begin{tabular}{|p{3.6cm}|c|p{3.8cm}|c|p{3.8cm}|}
\hline
Method &
\multicolumn{2}{c|}{Example~\ref{ex:quaddirichlet}} &
\multicolumn{2}{c|}{Example~\ref{ex:dirichletcubic}} \\
\cline{2-5}
& Area & Shape parameters & Area & Shape parameters \\
\hline
PSO-based GT--Dirichlet extremal (proposed) &
\underline{37.4396} &
$\alpha_1=\alpha_2=\beta_1=\beta_2=0.8706$ &
\underline{37.7905} &
$\alpha_1=\alpha_2=\beta_1=\beta_2=1.4823$ \\
\hline
Bernstein--B\'ezier Dirichlet method \cite{monterde2004bezier2} &
38.0000 & N/A & 38.0000 & N/A \\
\hline
Quasi-harmonic method \cite{hao2012minimal} &
38.3817 & N/A & 38.0826 & N/A \\
\hline
Bending-energy method \cite{miao2005bezier} &
38.4057 & N/A & 40.0661 & N/A \\
\hline
\end{tabular}
\label{tab:one}
\end{table}

% NOTE: You had Section~\ref{sec:dirichletextremal} which does not exist.
% It should be Section~\ref{sec:dirichlet-extremals}.

%%%%%%%%%%%%%%%%%%%%%%%%%%%%%%%%%%%%%%%%%%%%%%%%%%%%%%%%%%%%%%%%%
\section{Relation to harmonic patches}
\label{sec:harmonic}

Let $D=[0,1]^2$ be the parameter domain. For a sufficiently smooth parametric surface
$S\in H^1(D;\mathbb{R}^3)$, the Dirichlet functional is
\begin{equation}
\label{eq:dirichlet_general}
\mathcal{D}(S)
=
\frac12\int_{D}\Big(\|S_u\|^2+\|S_v\|^2\Big)\,du\,dv .
\end{equation}
In the \emph{unconstrained} (infinite-dimensional) setting, where $S$ ranges over all admissible maps with fixed
boundary trace $S|_{\partial D}$---the Euler--Lagrange equation associated with \eqref{eq:dirichlet_general} is the
vector Laplace equation
\begin{equation}
\label{eq:laplace}
\Delta S \;=\; S_{uu}+S_{vv} \;=\; 0
\qquad \text{in }D,
\end{equation}
so that the minimizer is harmonic. Denote this unconstrained minimizer by $S^{\mathrm{harm}}$.

In the \emph{constrained} (finite-dimensional) setting of this paper, $S$ is restricted to the GT--B\'ezier
tensor-product space with prescribed boundary control points (and, for each fixed
$\boldsymbol{\alpha}=(\alpha_1,\alpha_2,\beta_1,\beta_2)\in\mathcal{P}$, the interior control points are chosen as
the Dirichlet extremal in that space). Let $S^{\mathrm{GT}}$ be the resulting constrained minimizer for the same
boundary data. Since the admissible class is smaller, the variational inequality
\begin{equation}
\label{eq:dirichlet_inequality}
\mathcal{D}(S^{\mathrm{harm}})\;\le\;\mathcal{D}(S^{\mathrm{GT}})
\end{equation}
holds, with strict inequality in general. Equality can occur only if the constrained patch happens to satisfy
\eqref{eq:laplace}, i.e.\ it is harmonic in the classical sense.

%%%%%%%%%%%%%%%%%%%%%%%%%%%%%%%%%%%%%%%%%%%%%%%%%%%%%%%%%%%%%%%%%
\subsection{Constructing (approximately) harmonic GT--B\'ezier surfaces}
\label{sec:harmonic_construct}

Because a generic GT--B\'ezier patch does not satisfy $\Delta S\equiv 0$ identically, we measure harmonicity
by the least-squares Laplacian defect
\begin{equation}
\label{eq:obj_harmonic}
\mathcal{F}(\boldsymbol{\alpha})
=
\int_{D}\big\|\Delta S(u,v;\boldsymbol{\alpha})\big\|^2\,du\,dv,
\qquad
\boldsymbol{\alpha}=(\alpha_1,\alpha_2,\beta_1,\beta_2)\in\mathcal{P},
\end{equation}
and select $\boldsymbol{\alpha}$ by minimizing $\mathcal{F}$ (using PSO as in Section~\ref{sec:pso}). Here
$S(\cdot,\cdot;\boldsymbol{\alpha})$ is the GT--B\'ezier surface constructed from a control net whose missing
points are first computed from a discrete harmonicity condition at the control-net level, following
\cite{monterde2003Plateau}.

Let $\widetilde S(u,v)=\sum_{i=0}^{n}\sum_{j=0}^{m}B_i^n(u)B_j^m(v)\,P_{ij}$ be a classical tensor-product
B\'ezier surface. Using standard second-difference operators
\[
\Delta^{2,0}P_{ij}=P_{i+2,j}-2P_{i+1,j}+P_{i,j},
\qquad
\Delta^{0,2}P_{ij}=P_{i,j+2}-2P_{i,j+1}+P_{i,j},
\]
one can express $\Delta \widetilde S$ in a Bernstein basis of degree $(n,m)$ and obtain, for interior indices
$(i,j)$, a linear relation of the form (see \cite{monterde2003Plateau})

\begin{equation}
    \begin{aligned}
    &n(n-1)\Big(a_{i,n}\Delta^{2,0}P_{ij}+b_{i-1,n}\Delta^{2,0}P_{i-1,j}+c_{i-2,n}\Delta^{2,0}P_{i-2,j}\Big)
    \\
&+m(m-1)\Big(a_{j,m}\Delta^{0,2}P_{ij}+b_{j-1,m}\Delta^{0,2}P_{i,j-1}+c_{j-2,m}\Delta^{0,2}P_{i,j-2}\Big)=0,
\end{aligned}
\label{eq:pointsharmonic}
\end{equation}

where, for $k\in\{0,\dots,n-2\}$,
\[
a_{k,n}=(n-k)(n-k-1),\qquad
b_{k,n}=2(k+1)(n-k-1),\qquad
c_{k,n}=(k+1)(k+2),
\]
and similarly for $(a_{k,m},b_{k,m},c_{k,m})$.
Given partial boundary data, \eqref{eq:pointsharmonic} yields a linear system for the missing control points of
a harmonic (Bernstein) patch.

After computing the missing control points by \eqref{eq:pointsharmonic} (in the Bernstein setting), we construct
a GT--B\'ezier patch with the same control net and then tune $\boldsymbol{\alpha}$ by minimizing the
harmonicity defect \eqref{eq:obj_harmonic}. This yields a GT patch with $\Delta S$ small in the $L^2$ sense,
hence ``approximately harmonic'' in the variational sense.

\begin{remark}
Classical B\'ezier patches admit harmonic constructions through linear constraints on the control net, but they
lack the additional shape degrees of freedom inherent to the GT-B\'ezier basis. The GT-B\'ezier basis parameters provide an extra level
of geometric control: they can be selected to (i) reduce the harmonicity defect \eqref{eq:obj_harmonic} and/or
(ii) subsequently reduce the Dirichlet energy (or area) while maintaining the same boundary data.
\end{remark}

\begin{example}[Harmonic reconstruction from partial boundary data]
\label{ex:harmonic}
We consider two configurations of missing data. In \emph{Case 1}, only the first and last columns of the control
net are prescribed (Figure~\ref{fig:harmonic}(a)); in \emph{Case 2}, only the first and last rows are prescribed
(Figure~\ref{fig:harmonic}(c)). In each case we compute the remaining control points by solving the linear system
induced by \eqref{eq:pointsharmonic}. We then construct the corresponding GT--B\'ezier patch and optimize the
shape parameters $\boldsymbol{\alpha}\in\mathcal{P}$ by minimizing the harmonicity defect
$\mathcal{F}(\boldsymbol{\alpha})$ in \eqref{eq:obj_harmonic} via PSO. The resulting approximately harmonic GT-B\'ezier 
surfaces and their curvature nephograms are shown in Figure~\ref{fig:harmonic}(b),(d). The PSO convergence history
(10 runs) is reported in Figure~\ref{fig:harmonic_convergence}.
\end{example}

\begin{figure}[t!]
\begin{center}
\subfigure[]{\includegraphics[width=2.5in]{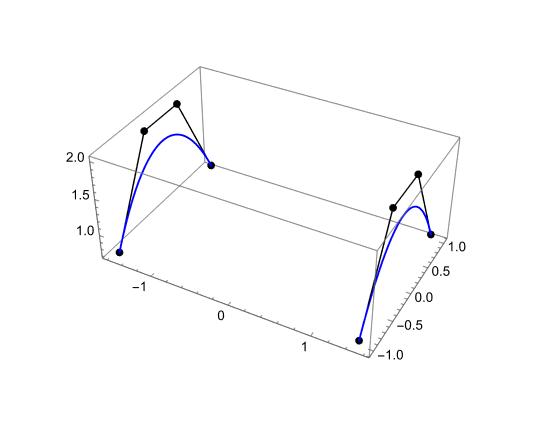}}\hspace{5em}
\subfigure[]{\includegraphics[width=2.7in]{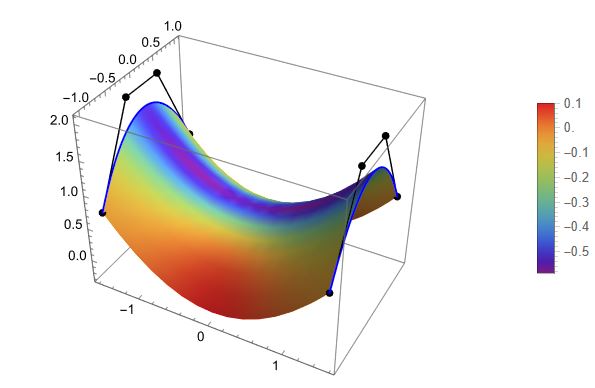}}\\
\subfigure[]{\includegraphics[width=2.5in]{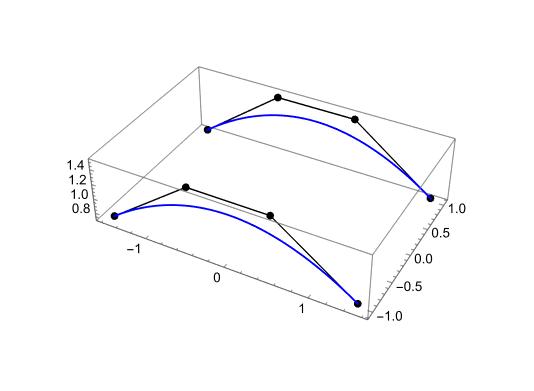}}\hspace{5em}
\subfigure[]{\includegraphics[width=2.7in]{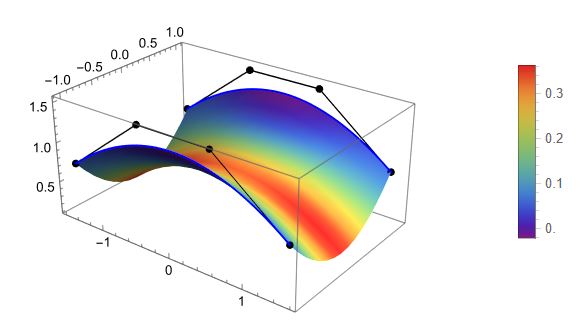}}
\caption{Approximately harmonic GT--B\'ezier surfaces from partial boundary control data.}
\label{fig:harmonic}
\end{center}
\end{figure}

\begin{figure}[t!]
\begin{center}
\includegraphics[width=4in]{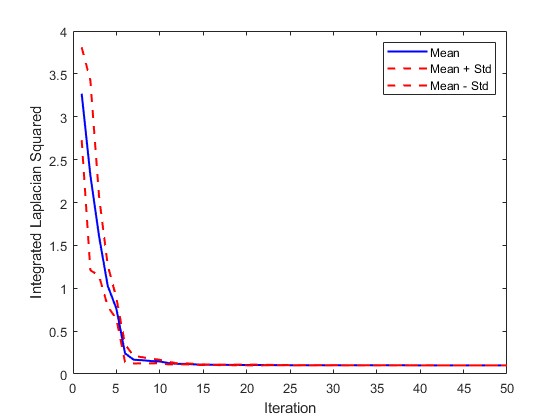}
\caption{PSO convergence for minimizing the harmonicity defect $\mathcal{F}(\boldsymbol{\alpha})$ in
\eqref{eq:obj_harmonic}.}
\label{fig:harmonic_convergence}
\end{center}
\end{figure}
\section{Extremals of the Dirichlet functional and minimal TB--Coons surfaces}
\label{sec:coon}

Let $D=[0,1]^2$ be the parameter domain. Given four boundary curves
\[
\Gamma_0(u),\Gamma_1(u)\ (u\in[0,1]),\qquad \Gamma_2(v),\Gamma_3(v)\ (v\in[0,1]),
\]
satisfying the standard vertex compatibility conditions
\[
\Gamma_0(0)=\Gamma_2(0),\ \Gamma_0(1)=\Gamma_3(0),\ \Gamma_1(0)=\Gamma_2(1),\ \Gamma_1(1)=\Gamma_3(1),
\]
the classical bilinearly blended Coons patch is the interpolant $\widetilde S:D\to\mathbb{R}^3$ defined by
\begin{equation}
\label{eq:coons_classical}
\widetilde S(u,v)= (1-v)\Gamma_0(u)+v\Gamma_1(u) + (1-u)\Gamma_2(v)+u\Gamma_3(v)
-\widetilde S_{\mathrm{bil}}(u,v),
\end{equation}
where the bilinear corner correction is
\begin{equation}
\label{eq:bilinear_corner}
\widetilde S_{\mathrm{bil}}(u,v)
=(1-u)(1-v)\,C_{00}+(1-u)v\,C_{01}+u(1-v)\,C_{10}+uv\,C_{11},
\end{equation}
with $C_{00}=\Gamma_0(0)$, $C_{01}=\Gamma_0(1)$, $C_{10}=\Gamma_1(0)$, $C_{11}=\Gamma_1(1)$.
Equivalently, \eqref{eq:coons_classical} can be written in the compact matrix form
\begin{equation}
\label{eq:coons_matrix}
\widetilde S(u,v)=
-\begin{bmatrix}-1 & 1-u & u\end{bmatrix}
\begin{bmatrix}
0 & \Gamma_0(u) & \Gamma_1(u)\\
\Gamma_2(v) & C_{00} & C_{01}\\
\Gamma_3(v) & C_{10} & C_{11}
\end{bmatrix}
\begin{bmatrix}-1\\ 1-v\\ v\end{bmatrix}.
\end{equation}
Throughout, we use the standard linear blending functions
\[
h_0(u)=1-u,\quad h_1(u)=u,\qquad g_0(v)=1-v,\quad g_1(v)=v,
\]
so that \eqref{eq:coons_matrix} matches the usual Coons construction.

Assume that the four boundary curves are represented by cubic control polygons. Using the notation of previous
sections, let $\{B_{i,3}\}_{i=0}^3$ denote the classical Bernstein basis and let $\{G_{i,3}(\cdot;\boldsymbol{\alpha})\}_{i=0}^3$
denote the cubic GT--B\'ezier basis, with parameter vector
$\boldsymbol{\alpha}=(\alpha_1,\alpha_2,\beta_1,\beta_2)\in\mathcal{P}$ (the same admissible set $\mathcal{P}$ used in
Section~\ref{sec:pso}). Let $\{Q_{ij}\}_{i,j=0}^3\subset\mathbb{R}^3$ be the $4\times4$ control net, where the twelve boundary
points are prescribed and the four interior points $Q_{11},Q_{12},Q_{21},Q_{22}$ are unknown. We represent the $u$-boundary curves by cubic Bernstein polynomials
\begin{equation}
\label{eq:coon_curve_u}
\Gamma_k(u)=\sum_{i=0}^{3} B_{i,3}(u)\,Q_{k,i},\qquad k\in\{0,3\},\quad u\in[0,1],
\end{equation}
and the $v$-boundary curves similarly as
\begin{equation}
\label{eq:coon_curve_v}
\Gamma_\ell(v)=\sum_{j=0}^{3} B_{j,3}(v)\,Q_{j,\ell},\qquad \ell\in\{0,3\},\quad v\in[0,1],
\end{equation}
where the index placement is chosen to be consistent with the tensor-product net $\{Q_{ij}\}$ (corners are
$Q_{00},Q_{03},Q_{30},Q_{33}$).  The Bernstein basis functions $B_{i,3}$ are as in \eqref{eq:bernstein}.
(Compared to the earlier discussion, we avoid mixing $m,n$ with a fixed cubic degree, and we correct the index mismatch.)

We now introduce two tensor-product patches that share the same control net $\{Q_{ij}\}$ but use mixed basis. 

(i) Bernstein in $u$ and GT--B\'ezier in $v$.
Define
\begin{equation}
\label{eq:R1}
 R_{1}(u,v;\boldsymbol{\alpha})
=\sum_{i=0}^{3}\sum_{j=0}^{3} B_{i,3}(u)\,G_{j,3}(v;\boldsymbol{\alpha})\,Q_{ij}.
\end{equation}
Then the boundary curves $v=0$ and $v=1$ interpolate the Bernstein curves determined by the first and last rows
$\{Q_{i0}\}_{i=0}^3$ and $\{Q_{i3}\}_{i=0}^3$, while $u=0,1$ are GT-B\'ezier curves in $v$. 

(ii) GT--B\'ezier in $u$ and Bernstein in $v$.
Similarly,
\begin{equation}
\label{eq:R2}
 R_{2}(u,v;\boldsymbol{\alpha})
=\sum_{i=0}^{3}\sum_{j=0}^{3} G_{i,3}(u;\boldsymbol{\alpha})\,B_{j,3}(v)\,Q_{ij}.
\end{equation}

Construct the four GT boundary curves (degree $3$) induced by the same boundary control points:
\begin{equation}
\label{eq:GT_boundary_u}
\Gamma'_k(u;\boldsymbol{\alpha})=\sum_{i=0}^{3} G_{i,3}(u;\boldsymbol{\alpha})\,Q_{k,i},\qquad k\in\{0,3\},
\end{equation}
\begin{equation}
\label{eq:GT_boundary_v}
\Gamma'_\ell(v;\boldsymbol{\alpha})=\sum_{j=0}^{3} G_{j,3}(v;\boldsymbol{\alpha})\,Q_{j,\ell},\qquad \ell\in\{0,3\}.
\end{equation}
Using these four GT curves in the bilinear Coons blending, define the GT--Coons corner correction patch
\begin{equation}
\label{eq:T}
 T(u,v;\boldsymbol{\alpha})
=
-\begin{bmatrix}-1 & 1-u & u\end{bmatrix}
\begin{bmatrix}
0 & \Gamma'_0(u;\boldsymbol{\alpha}) & \Gamma'_3(u;\boldsymbol{\alpha})\\
\Gamma'_0(v;\boldsymbol{\alpha}) & Q_{00} & Q_{03}\\
\Gamma'_3(v;\boldsymbol{\alpha}) & Q_{30} & Q_{33}
\end{bmatrix}
\begin{bmatrix}-1\\ 1-v\\ v\end{bmatrix}.
\end{equation}
By construction, $ T$ coincides with the same corner data and removes the double-counting of the bilinear part
in the standard Coons decomposition.

We define the TB--Coons surface (tensor-product Bernstein/GT hybrid with Coons correction) by
\begin{equation}
\label{eq:Suv}
{\mathcal S}(u,v;\boldsymbol{\alpha})
=
 R_{1}(u,v;\boldsymbol{\alpha})
+ R_{2}(u,v;\boldsymbol{\alpha})
- T(u,v;\boldsymbol{\alpha}).
\end{equation}
The surface ${\mathcal S}$ interpolates the prescribed boundary curves in the sense inherited from
\eqref{eq:R1}--\eqref{eq:T}. The construction is illustrated in Fig.~\ref{fig:coon_patch_1234}.  The remaining degrees of freedom are precisely the four interior control points
$Q_{11},Q_{12},Q_{21},Q_{22}$ (and, if desired, the shape parameters $\boldsymbol{\alpha}$).

To impose minimality, we minimize the
Dirichlet energy on $D$:
\begin{equation}
\label{eq:coon_dirichlet}
\mathcal{D}\!\left({\mathcal S}(\cdot,\cdot;\boldsymbol{\alpha})\right)
=
\frac12\int_{D}
\left(\left\|\partial_u{\mathcal S}(u,v;\boldsymbol{\alpha})\right\|^2
+\left\|\partial_v{\mathcal S}(u,v;\boldsymbol{\alpha})\right\|^2\right)\,du\,dv,
\end{equation}

\begin{figure}[t!]
\begin{center}
\subfigure[]{\includegraphics[width=2.2in]{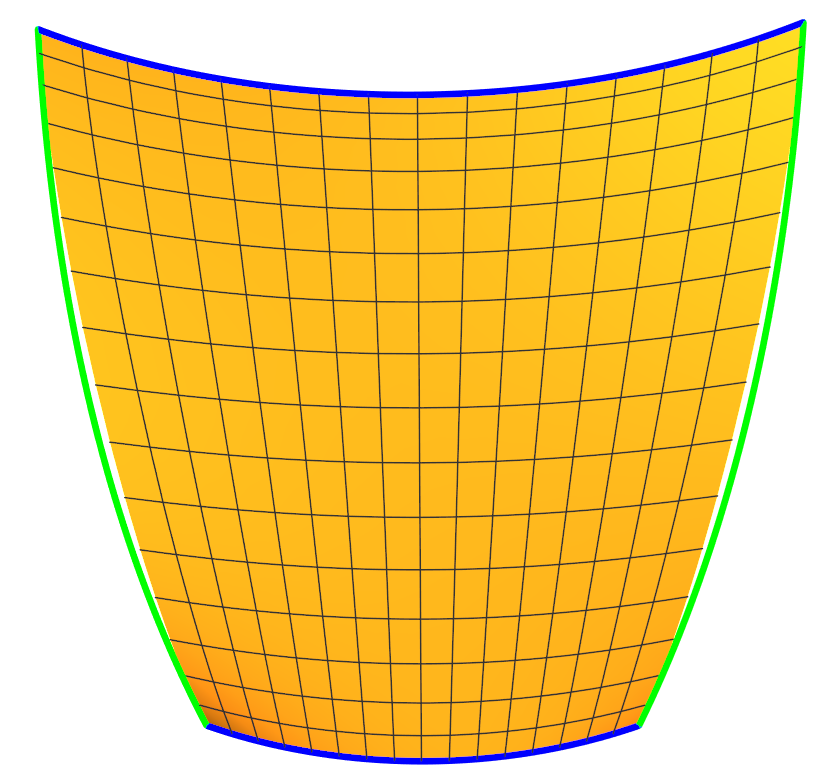}}\hspace{5em}
\subfigure[]{\includegraphics[width=2.2in]{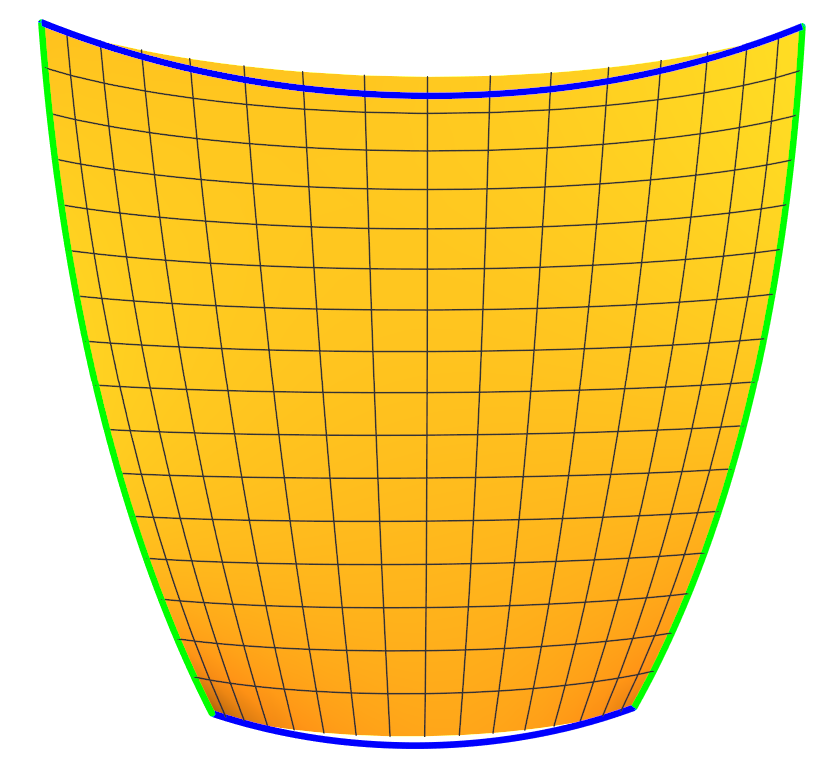}}
\\
\subfigure[]{\includegraphics[width=2.2in]{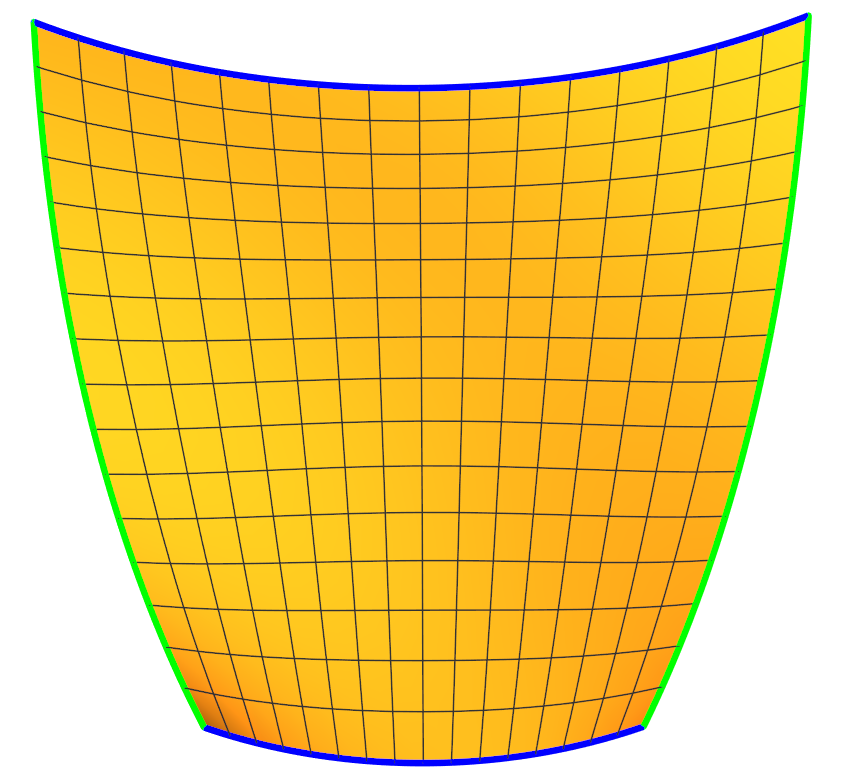}}\hspace{5em}
\subfigure[]{\includegraphics[width=2.in]{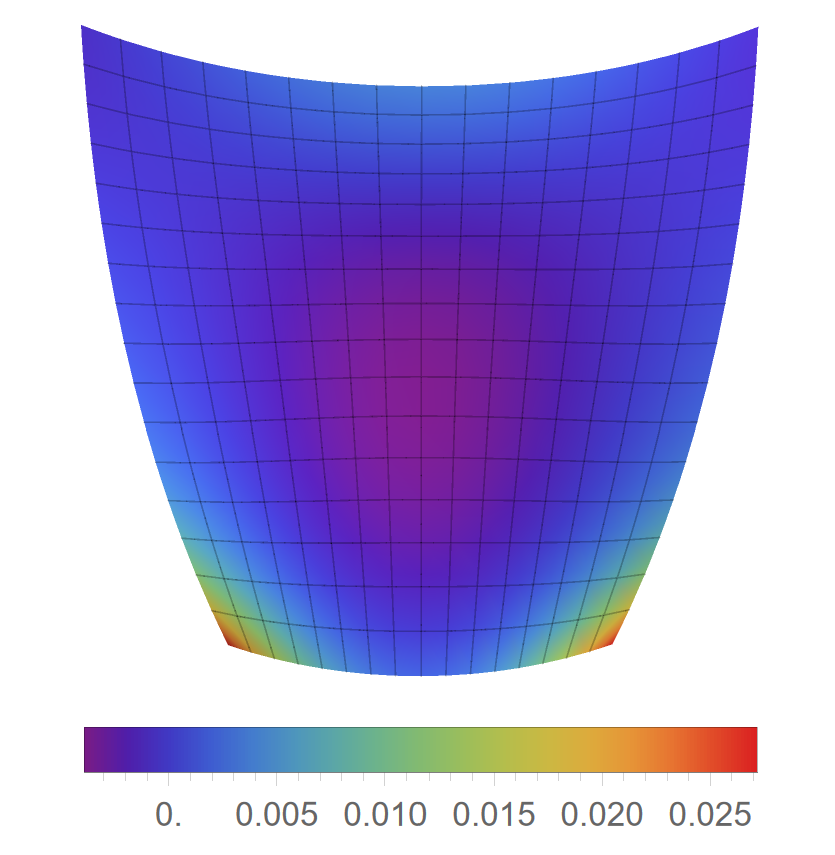}}
\caption{Construction of the TB--Coons surface: (a) $ R_1$ in \eqref{eq:R1}, (b) $ R_2$ in \eqref{eq:R2},
(c) final surface ${\mathcal S}$ in \eqref{eq:Suv}, (d) mean curvature nephogram of ${\mathcal S}$.}
\label{fig:coon_patch_1234}
\end{center}
\end{figure}
subject to fixed boundary control points. In the unconstrained setting, the minimizer with fixed boundary trace is
harmonic (and, under conformality, minimal). Here, however, we restrict to the finite-dimensional TB--Coons ansatz
\eqref{eq:Suv}; thus the computed patch is a \emph{Dirichlet extremal within the TB--Coons space}, i.e. the Galerkin
projection of the harmonic map onto the span of the chosen basis.

For each fixed $\boldsymbol{\alpha}\in\mathcal{P}$, the functional \eqref{eq:coon_dirichlet} is a quadratic form in the
unknown interior control points. Consequently, the necessary and sufficient optimality condition is the linear system
\begin{equation}
\label{eq:coon_linear_system}
\nabla_{(Q_{11},Q_{12},Q_{21},Q_{22})}\,
\mathcal{D}\!\left({\mathcal S}(\cdot,\cdot;\boldsymbol{\alpha})\right)=0,
\end{equation}
which yields the unique interior control points provided the associated stiffness matrix is positive definite
(which holds under the standard nondegeneracy assumptions on the basis).
We additionally optimize $\boldsymbol{\alpha}$ over $\mathcal{P}$ using PSO as in
Section~\ref{sec:pso}, i.e.
\begin{equation}
\label{eq:coon_pso}
\boldsymbol{\alpha}^\star\in\arg\min_{\boldsymbol{\alpha}\in\mathcal{P}}
\mathcal{D}\!\left({\mathcal S}(\cdot,\cdot;\boldsymbol{\alpha})\right),
\end{equation}
where for each particle position $\boldsymbol{\alpha}$ the interior points are computed from \eqref{eq:coon_linear_system}.
We call the resulting surface ${\mathcal S}(\cdot,\cdot;\boldsymbol{\alpha}^\star)$ a \emph{minimal TB--Coons surface}
in the sense of Dirichlet extremality under the TB--Coons constraint.

%%%%%%%%%%%%%%%%%%%%%%%%%%%%%%%%%%%%%%%%%%%%%%%%%%%%%%%%%%%%%%%%%
\section{Conclusion}
\label{sec:conclusion}

This work develops a Plateau-type construction for GT-Bézier surfaces by combining harmonic-relaxation ideas with the additional geometric flexibility provided by the generalized trigonometric basis. With boundary interpolation enforced at the level of the control net, the interior degrees of freedom are determined through a Dirichlet-energy extremal condition within the chosen finite-dimensional GT-B\'ezier  space. For each admissible selection of the GT-B\'ezier basis shape parameters, this variational principle yields a parameter-dependent linear system whose solution specifies the interior control net, and hence a variationally distinguished surface spanning the prescribed boundary data.

A central outcome is a two-level computational strategy: an inner linear solve that produces the Dirichlet extremal surface for fixed parameters, and an outer low-dimensional search that tunes the shape parameters to decrease the Dirichlet energy further. Implemented via particle swarm optimization, this coupling consistently improves the energy in our experiments and, in most tested configurations, leads to a reduction in the realized surface area when compared with classical Bernstein--B\'ezier Dirichlet patches as well as representative quasi-harmonic and bending-energy competitors, all under identical boundary constraints. The observed improvements support the viewpoint that shape-parameterized non-polynomial basis can enhance discrete variational surface construction without altering the prescribed boundary geometry.

We also examined the relationship between harmonicity and minimality within this framework. In the unconstrained setting, Dirichlet-energy minimization selects harmonic parametrizations; in contrast, restricting to GT-B\'ezier tensor-product spaces yields constrained extremals whose deviation from harmonic behavior can be assessed and, when desired, reduced through parameter tuning. Finally, the same extremal Dirichlet principle was transferred to a hybrid TB--Coons construction, providing a systematic way to infer interior control points and generate minimality-biased Coons-type surfaces from sparse boundary information.

Future work includes a sharper analysis of admissible parameter regions and the conditioning of the resulting stiffness systems, the incorporation of explicit conformality measures to strengthen the connection between energy minimization and true area minimization, and extensions to multipatch geometries with continuity constraints and adaptive refinement strategies.

%%%%%%%%%%%%%%%%%%%%%%%%%%%%%%%%%%%%%%%%%%%%%%%%%%%%%%%%%%%%%%%%%
% \section*{Conflict of Interest}
% The authors declared that they have no conflict of interest.

\section*{Funding}
This research was supported by the Ministry of Higher Education Malaysia through the Fundamental Research Grant Scheme (FRGS/1/2023/STG06/USM/03/04) and by the School of Mathematical Sciences, Universiti Sains Malaysia.

% \section*{Author Contributions}
% \textbf{Muhammad Ammad:} Writing—original draft, methodology, visualization, validation, formal analysis.\\
% \textbf{Md Yushalify Misro:} Writing—review \& editing, conceptualization, validation, formal analysis, supervision, funding acquisition.\\
% \textbf{Samia Bibi:} Writing—review \& editing.\\
% \textbf{Ahmad Ramli:} Writing—review \& editing, supervision.

\section*{Acknowledgements}
The authors are very grateful to the anonymous referees for their valuable suggestions.

%%%%%%%%%%%%%%%%%%%%%%%%%%%%%%%%%%%%%%%%%%%%%%%%%%%%%%%%%%%%%%%%%
\appendix
\section{Derivative identities for GT tensor-product patches}
\label{app:derivative-recursion}

This appendix records the derivative identities used in Section~\ref{sec:dirichlet-extremals}.  Our goal is to
justify the ``difference-form'' representations of the first partial derivatives, which are the key step for
separating the integrals and assembling the stiffness coefficients.  We keep the presentation compact, but we
include the main algebraic steps so that the argument reads as a proof rather than a direct computation.

\subsection{Univariate GT-B\'ezier basis: a derivative recursion}
Let $\{G_{k,n}(\cdot;\boldsymbol{\theta})\}_{k=0}^n$ be the univariate GT-B\'ezier basis generated from the seed
\eqref{eq:gt2} via the degree-elevation recursion \eqref{eq:generalbasis}.  Recall the recurrence (with the
standard convention $G_{k,n}\equiv 0$ for $k\notin\{0,\dots,n\}$):
\begin{equation}\label{eq:gt_rec}
G_{k,n}(t;\boldsymbol{\theta})
=(1-t)\,G_{k,n-1}(t;\boldsymbol{\theta})+t\,G_{k-1,n-1}(t;\boldsymbol{\theta}).
\end{equation}
Differentiating \eqref{eq:gt_rec} and using the product rule yields, for $n\ge 3$,
\begin{equation}\label{eq:gt_first_derivative_recursion_app}
\begin{aligned}
G'_{k,n}(t;\boldsymbol{\theta})
&=\frac{d}{dt}\Big((1-t)G_{k,n-1}(t;\boldsymbol{\theta})\Big)
 +\frac{d}{dt}\Big(t\,G_{k-1,n-1}(t;\boldsymbol{\theta})\Big) \\
&=-G_{k,n-1}(t;\boldsymbol{\theta})+(1-t)\,G'_{k,n-1}(t;\boldsymbol{\theta})
  +G_{k-1,n-1}(t;\boldsymbol{\theta})+t\,G'_{k-1,n-1}(t;\boldsymbol{\theta}),
\end{aligned}
\end{equation}
which is the claimed derivative recursion.  By iterating \eqref{eq:gt_first_derivative_recursion_app}, any
$G'_{k,n}$ reduces to the explicit derivatives of the seed functions.

\subsection{A univariate ``difference form'' for GT--B\'ezier curves}
The next lemma is the structural identity used later for surfaces: it rewrites the derivative of a GT curve in
terms of first differences of control points plus a lower-degree remainder.

\begin{lemma}[Derivative in difference form]\label{lem:curve_difference_form}
Let $F(t)=\sum_{k=0}^n G_{k,n}(t;\boldsymbol{\theta})\,b_k$ be a GT--B\'ezier curve of degree $n\ge 3$.
Then
\begin{equation}\label{eq:curve_difference_form_app}
F'(t)
=
\sum_{k=0}^{n-1}\Big(G_{k,n-1}(t;\boldsymbol{\theta})+t\,G'_{k,n-1}(t;\boldsymbol{\theta})\Big)\,(b_{k+1}-b_k)
+\sum_{k=0}^{n-1}G'_{k,n-1}(t;\boldsymbol{\theta})\,b_k .
\end{equation}
\end{lemma}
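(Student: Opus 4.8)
The statement is a structural identity for the derivative of a degree-$n$ GT--Bézier curve, so the natural approach is a direct computation starting from the degree-elevation recursion \eqref{eq:gt_rec}, differentiating, and then re-indexing sums to collect the coefficient of each $b_k$ into a first-difference form. First I would write $F(t)=\sum_{k=0}^n G_{k,n}(t;\boldsymbol{\theta})\,b_k$ and substitute the recursion $G_{k,n}=(1-t)G_{k,n-1}+t\,G_{k-1,n-1}$, so that $F(t)=(1-t)\sum_{k=0}^{n-1}G_{k,n-1}(t;\boldsymbol{\theta})\,b_k + t\sum_{k=1}^{n}G_{k-1,n-1}(t;\boldsymbol{\theta})\,b_k$; shifting the index in the second sum gives $F(t)=(1-t)\sum_{k=0}^{n-1}G_{k,n-1}b_k + t\sum_{k=0}^{n-1}G_{k,n-1}b_{k+1}$, using the vanishing convention for out-of-range indices.

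**Key steps.** Having written $F$ in this $(1-t),t$-weighted form, I would differentiate using the product rule. The derivative of $(1-t)G_{k,n-1}b_k$ is $-G_{k,n-1}b_k+(1-t)G'_{k,n-1}b_k$, and the derivative of $t\,G_{k,n-1}b_{k+1}$ is $G_{k,n-1}b_{k+1}+t\,G'_{k,n-1}b_{k+1}$. Summing over $k=0,\dots,n-1$ and grouping: the two ungrimed terms combine to $\sum_k G_{k,n-1}(b_{k+1}-b_k)$, and the two primed terms give $\sum_k G'_{k,n-1}\big((1-t)b_k + t\,b_{k+1}\big) = \sum_k G'_{k,n-1}\big(b_k + t(b_{k+1}-b_k)\big)$. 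Splitting this last expression yields $\sum_k t\,G'_{k,n-1}(b_{k+1}-b_k) + \sum_k G'_{k,n-1}b_k$. Combining with the first group produces exactly $\sum_{k=0}^{n-1}\big(G_{k,n-1}+t\,G'_{k,n-1}\big)(b_{k+1}-b_k) + \sum_{k=0}^{n-1}G'_{k,n-1}b_k$, which is \eqref{eq:curve_difference_form_app}.

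**Main obstacle.** The computation is essentially routine; the only place requiring care is the index bookkeeping across the vanishing convention $G_{k,m}\equiv 0$ for $k\notin\{0,\dots,m\}$. One must check that shifting the index in $\sum_{k=1}^n G_{k-1,n-1}(t;\boldsymbol{\theta})\,b_k$ to $\sum_{k=0}^{n-1}G_{k,n-1}(t;\boldsymbol{\theta})\,b_{k+1}$ is consistent — the term $k=n$ in the original sum contributes $G_{n-1,n-1}b_n$, which is captured by $k=n-1$ in the shifted sum, and no spurious boundary terms appear because $G_{-1,n-1}\equiv 0$. A brief remark noting that all sums range over $0\le k\le n-1$, together with the convention, suffices to make the re-indexing rigorous; no genuine analytic difficulty arises since everything is a finite linear combination of $C^\infty$ functions (the trigonometric seed \eqref{eq:gt2} is smooth), so differentiation term by term is justified.
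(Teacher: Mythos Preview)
Your proof is correct and follows essentially the same route as the paper's: both rely on the degree-elevation recursion \eqref{eq:gt_rec}, the product rule, and an index shift to extract the first differences $b_{k+1}-b_k$. The only cosmetic difference is ordering---you substitute the recursion into $F$ and then differentiate, whereas the paper first records the derivative recursion \eqref{eq:gt_first_derivative_recursion_app} for $G'_{k,n}$ and inserts it into $F'=\sum_k G'_{k,n}b_k$; the resulting algebra is identical.
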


\begin{proof}
Differentiate $F$ and insert \eqref{eq:gt_first_derivative_recursion_app}:
\[
F'(t)=\sum_{k=0}^n G'_{k,n}(t)\,b_k
=\sum_{k=0}^n\Big[-G_{k,n-1}+(1-t)G'_{k,n-1}+G_{k-1,n-1}+tG'_{k-1,n-1}\Big]\,b_k.
\]
Group the terms into four sums and shift indices in the terms containing $k-1$:
\[
\sum_{k=0}^n G_{k-1,n-1}\,b_k=\sum_{k=0}^{n-1} G_{k,n-1}\,b_{k+1},\qquad
\sum_{k=0}^n t\,G'_{k-1,n-1}\,b_k=\sum_{k=0}^{n-1} t\,G'_{k,n-1}\,b_{k+1}.
\]
Using these shifts (and the convention $G_{n,n-1}\equiv 0$), we obtain
\[
F'(t)
=\sum_{k=0}^{n-1}\Big(G_{k,n-1}+tG'_{k,n-1}\Big)(b_{k+1}-b_k)
+\sum_{k=0}^{n-1}\Big((1-t)G'_{k,n-1}+tG'_{k,n-1}\Big)b_k,
\]
and the last bracket simplifies to $G'_{k,n-1}$, giving \eqref{eq:curve_difference_form_app}.
\end{proof}

\subsection{Tensor-product GT-B\'ezier surface: first partial derivatives}
Let $S$ be the GT tensor-product patch \eqref{eq:GT_surface_dirichlet} of bidegree $(m,n)$:
\[
S(u,v;\boldsymbol{\alpha},\mathbf{P})
=
\sum_{i=0}^{m}\sum_{j=0}^{n}
P_{ij}\,G_{i,m}(u;\boldsymbol{\alpha}^{(u)})\,G_{j,n}(v;\boldsymbol{\alpha}^{(v)}).
\]
Termwise differentiation immediately gives the separated representations
\begin{equation}\label{eq:SuSv_basic_app}
\begin{aligned}
S_u(u,v)
&=
\sum_{i=0}^{m}\sum_{j=0}^{n}
P_{ij}\,G'_{i,m}(u;\boldsymbol{\alpha}^{(u)})\,G_{j,n}(v;\boldsymbol{\alpha}^{(v)}),\\
S_v(u,v)
&=
\sum_{i=0}^{m}\sum_{j=0}^{n}
P_{ij}\,G_{i,m}(u;\boldsymbol{\alpha}^{(u)})\,G'_{j,n}(v;\boldsymbol{\alpha}^{(v)}),
\end{aligned}
\end{equation}
where $G'$ can be evaluated using \eqref{eq:gt_first_derivative_recursion_app}.

\subsection{Difference-form representations for \texorpdfstring{$S_u$}{Su} and \texorpdfstring{$S_v$}{Sv}}
For assembling the normal equations in Section~\ref{sec:dirichlet-extremals}, it is useful to isolate first
differences of control points in the $u$- and $v$-directions.  This follows by applying
Lemma~\ref{lem:curve_difference_form} in one parameter while treating the other parameter as a spectator.

Fix $v$ and define a $u$-curve with $v$-dependent control points
\[
B_i(v):=\sum_{j=0}^n P_{ij}\,G_{j,n}(v;\boldsymbol{\alpha}^{(v)}),
\qquad\text{so that}\qquad
S(u,v)=\sum_{i=0}^m G_{i,m}(u;\boldsymbol{\alpha}^{(u)})\,B_i(v).
\]
Applying Lemma~\ref{lem:curve_difference_form} to the curve $u\mapsto S(u,v)$ yields
\begin{equation}\label{eq:Su_diff_app}
S_u(u,v)
=
\sum_{i=0}^{m-1}\Big(G_{i,m-1}(u)+u\,G'_{i,m-1}(u)\Big)\,(B_{i+1}(v)-B_i(v))
+\sum_{i=0}^{m-1}G'_{i,m-1}(u)\,B_i(v),
\end{equation}
where, for readability, we suppressed the parameter vectors in $G_{i,\cdot}(\,\cdot\,;\boldsymbol{\alpha}^{(u)})$.
Expanding $B_{i+1}(v)-B_i(v)$ and $B_i(v)$ in terms of the surface control points gives
\begin{equation}\label{eq:Su_difference_form}
\begin{aligned}
S_u(u,v)
&=
\sum_{i=0}^{m-1}\sum_{j=0}^{n}
\Big( G_{i,m-1}(u;\boldsymbol{\alpha}^{(u)}) + u\,G'_{i,m-1}(u;\boldsymbol{\alpha}^{(u)}) \Big)
G_{j,n}(v;\boldsymbol{\alpha}^{(v)})\,(P_{i+1,j}-P_{i,j}) \\
&\quad+
\sum_{i=0}^{m-1}\sum_{j=0}^{n}
G'_{i,m-1}(u;\boldsymbol{\alpha}^{(u)})\,G_{j,n}(v;\boldsymbol{\alpha}^{(v)})\,P_{i,j}.
\end{aligned}
\end{equation}

The formula for $S_v$ is obtained analogously by fixing $u$ and applying
Lemma~\ref{lem:curve_difference_form} to the $v$-direction, which yields
\begin{equation}\label{eq:Sv_difference_form}
\begin{aligned}
S_v(u,v)
&=
\sum_{i=0}^{m}\sum_{j=0}^{n-1}
G_{i,m}(u;\boldsymbol{\alpha}^{(u)})
\Big( G_{j,n-1}(v;\boldsymbol{\alpha}^{(v)}) + v\,G'_{j,n-1}(v;\boldsymbol{\alpha}^{(v)}) \Big)\,(P_{i,j+1}-P_{i,j})\\
&\quad+
\sum_{i=0}^{m}\sum_{j=0}^{n-1}
G_{i,m}(u;\boldsymbol{\alpha}^{(u)})\,G'_{j,n-1}(v;\boldsymbol{\alpha}^{(v)})\,P_{i,j}.
\end{aligned}
\end{equation}

Formulas \eqref{eq:Su_difference_form}--\eqref{eq:Sv_difference_form} are exactly the representations invoked in
Section~\ref{sec:dirichlet-extremals} to separate the integrals and form the stiffness coefficients
\eqref{eq:I_defs}--\eqref{eq:J_defs}.

%%%%%%%%%%%%%%%%%%%%%%%%%%%%%%%%%%%%%%%%%%%%%%%%%%%%%%%%%%%%%%%%%
\bibliographystyle{unsrt}
\bibliography{00001}

@book{alarcon2021minimal,
  title={Minimal surfaces from a complex analytic viewpoint},
  author={Alarc{\'o}n, Antonio and Forstneri{\v{c}}, Franc and L{\'o}pez, Francisco J},
  volume={10},
  year={2021},
  publisher={Springer}
}

@article{kapfer2011minimal,
  title={Minimal surface scaffold designs for tissue engineering},
  author={Kapfer, Sebastian and Hyde, Stephen and Mecke, Klaus and Arns, Christoph and Schr{\"o}der-Turk, Gerd},
  journal={Biomaterials},
  volume={32},
  number={29},
  pages={6875--6882},
  year={2011},
  publisher={Elsevier}
}

@book{burton2003minimal,
  title={Minimal triangulations and normal surfaces},
  author={Burton, Benjamin Andrew},
  year={2003},
  publisher={University of Melbourne, Department of Mathematics and Statistics}
}

@article{monterde2004bezier2,
  title={B{\'e}zier surfaces of minimal area: The {D}irichlet approach},
  author={Monterde, Juan},
  journal={Computer Aided Geometric Design},
  volume={21},
  number={2},
  pages={117--136},
  year={2004},
  publisher={Elsevier}
}

@article{traasdahl2011high4,
  title={High order numerical approximation of minimal surfaces},
  author={Tr{\aa}sdahl, {\O}ystein and R{\o}nquist, Einar},
  journal={Journal of Computational Physics},
  volume={230},
  number={12},
  pages={4795--4810},
  year={2011},
  publisher={Elsevier}
}

@article{yoo2011computer5,
  title={Computer-aided porous scaffold design for tissue engineering using triply periodic minimal surfaces},
  author={Yoo, Dong-Jin},
  journal={International Journal of Precision Engineering and Manufacturing},
  volume={12},
  pages={61--71},
  year={2011},
  publisher={Springer}
}

@book{colding2011course6,
  title={A course in minimal surfaces},
  author={Colding, Tobias  and Minicozzi, William},
  volume={121},
  year={2011},
  publisher={American Mathematical Soc.}
}

@article{pan2011construction7,
  title={Construction of minimal subdivision surface with a given boundary},
  author={Pan, Qing and Xu, Guoliang},
  journal={Computer-Aided Design},
  volume={43},
  number={4},
  pages={374--380},
  year={2011},
  publisher={Elsevier}
}

@article{wang2007periodic8,
  title={Periodic surface modeling for computer aided nano design},
  author={Wang, Yan},
  journal={Computer-Aided Design},
  volume={39},
  number={3},
  pages={179--189},
  year={2007},
  publisher={Elsevier}
}

@inproceedings{monterde2003Plateau,
  title={The plateau-{B}{\'e}zier problem},
  author={Monterde, Juan},
  booktitle={Mathematics of Surfaces: 10th IMA International Conference, Leeds, UK, September 15-17, 2003. Proceedings},
  pages={262--273},
  year={2003},
  organization={Springer}
}

@article{barrett2007parametric,
  title={A parametric finite element method for fourth order geometric evolution equations},
  author={Barrett, John and Garcke, Harald and N{\"u}rnberg, Robert},
  journal={Journal of Computational Physics},
  volume={222},
  number={1},
  pages={441--467},
  year={2007},
  publisher={Elsevier}
}

@phdthesis{polthier2002polyhedral12,
  title={Polyhedral surfaces of constant mean curvature},
  author={Polthier, Konrad},
  year={2002},
  school={Habilitationsschrift TU Berlin}
}

@article{ammad2022novel,
  title={A novel generalized trigonometric {B}{\'e}zier curve: properties, continuity conditions and applications to the curve modeling},
  author={Ammad, Muhammad and Misro, Md Yushalify and Ramli, Ahmad},
  journal={Mathematics and Computers in Simulation},
  volume={194},
  pages={744--763},
  year={2022},
  publisher={Elsevier}
}

@article{ahmad2014variationaln1,
  title={Variational minimization on string-rearrangement surfaces, illustrated by an analysis of the bilinear interpolation},
  author={Ahmad, Daud and Masud, Bilal},
  journal={Applied Mathematics and Computation},
  volume={233},
  pages={72--84},
  year={2014},
  publisher={Elsevier}
}

@article{hao2013approximationn15,
  title={An approximation method based on {MRA} for the quasi-Plateau problem},
  author={Hao, Yong-Xia and Li, Chong-Jun and Wang, Ren-Hong},
  journal={BIT Numerical Mathematics},
  volume={53},
  number={2},
  pages={411--442},
  year={2013},
  publisher={Springer}
}

@article{kassabov2014transitionn19,
  title={Transition to canonical principal parameters on minimal surfaces},
  author={Kassabov, Ognian},
  journal={Computer Aided Geometric Design},
  volume={31},
  number={7-8},
  pages={441--450},
  year={2014},
  publisher={Elsevier}
}

@article{pan2011constructionn27,
  title={Construction of minimal subdivision surface with a given boundary},
  author={Pan, Qing and Xu, Guoliang},
  journal={Computer-Aided Design},
  volume={43},
  number={4},
  pages={374--380},
  year={2011},
  publisher={Elsevier}
}

@article{xu2010quinticn31,
  title={Quintic parametric polynomial minimal surfaces and their properties},
  author={Xu, Gang and Wang, Guo-zhao},
  journal={Differential Geometry and its Applications},
  volume={28},
  number={6},
  pages={697--704},
  year={2010},
  publisher={Elsevier}
}

@inproceedings{xu2008parametricn32,
  title={Parametric polynomial minimal surfaces of degree six with isothermal parameter},
  author={Xu, Gang and Wang, Guozhao},
  booktitle={Advances in Geometric Modeling and Processing: 5th International Conference, GMP 2008, Hangzhou, China, April 23-25, 2008. Proceedings 5},
  pages={329--343},
  year={2008},
  organization={Springer}
}

@article{li2022geodesic,
  title={Geodesic path for the minimal energy cost in shortcuts to isothermality},
  author={Li, Geng and Chen, Jin-Fu and Sun, CP and Dong, Hui},
  journal={Physical Review Letters},
  volume={128},
  number={23},
  pages={230603},
  year={2022},
  publisher={APS}
}

@article{douglas1931solution,
  title={Solution of the problem of Plateau},
  author={Douglas, Jesse},
  journal={Transactions of the American Mathematical Society},
  volume={33},
  number={1},
  pages={263--321},
  year={1931}
}

@book{osserman2013survey,
  title={A survey of minimal surfaces},
  author={Osserman, Robert},
  year={2013},
  publisher={Courier Corporation}
}

@article{hao2012minimal,
  title={Minimal quasi-{B}{\'e}zier surface},
  author={Hao, Yong-Xia and Wang, Ren-Hong and Li, Chong-Jun},
  journal={Applied Mathematical Modelling},
  volume={36},
  number={12},
  pages={5751--5757},
  year={2012},
  publisher={Elsevier}
}

@inproceedings{miao2005bezier,
  title={B{\'e}zier surfaces of minimal internal energy},
  author={Miao, Yongwei and Shou, Huahao and Feng, Jieqing and Peng, Qunsheng and Forrest, A Robin},
  booktitle={Mathematics of Surfaces XI: 11th IMA International Conference, Loughborough, UK, September 5-7, 2005. Proceedings},
  pages={318--335},
  year={2005},
  organization={Springer}
}

@incollection{dierkes2010minimal,
  title={Minimal surfaces},
  author={Dierkes, Ulrich and Hildebrandt, Stefan and K{\"u}ster, Albrecht and Wohlrab, Ortwin},
  booktitle={Minimal Surfaces I: Boundary Value Problems},
  pages={53--88},
  year={2010},
  publisher={Springer}
}

@article{crane2018discrete,
  title={Discrete differential geometry: An applied introduction},
  author={Crane, Keenan},
  journal={Notices of the AMS, Communication},
  volume={1153},
  year={2018}
}

@inproceedings{mullen2008spectral,
  title={Spectral conformal parameterization},
  author={Mullen, Patrick and Tong, Yiying and Alliez, Pierre and Desbrun, Mathieu},
  booktitle={Computer Graphics Forum},
  volume={27},
  pages={1487--1494},
  year={2008},
  organization={Wiley Online Library}
}

@article{yueh2017efficient,
  title={An efficient energy minimization for conformal parameterizations},
  author={Yueh, Mei-Heng and Lin, Wen-Wei and Wu, Chin-Tien and Yau, Shing-Tung},
  journal={Journal of Scientific Computing},
  volume={73},
  number={1},
  pages={203--227},
  year={2017},
  publisher={Springer}
}

@article{shami2022particlesur,
  title={Particle swarm optimization: A comprehensive survey},
  author={Shami, Tareq and El-Saleh, Ayman and Alswaitti, Mohammed and Al-Tashi, Qasem and Summakieh, Mhd Amen and Mirjalili, Seyedali},
  journal={Ieee Access},
  volume={10},
  pages={10031--10061},
  year={2022},
  publisher={IEEE}
}

@article{hao2020quasi,
  title={Quasi-area functional for the {P}lateau--{B}{\'e}zier problem},
  author={Hao, Yong-Xia},
  journal={Graphical Models},
  volume={112},
  pages={101095},
  year={2020},
  publisher={Elsevier}
}

@article{ahmad2022computational,
  title={A Computational Model for q-{B}ernstein Quasi-Minimal B{\'e}zier Surface},
  author={Ahmad, Daud and Mahmood, M Khalid and Xin, Qin and Tawfiq, Ferdous MO and Bashir, Sadia and Khalid, Arsha},
  journal={Journal of Mathematics},
  volume={2022},
  number={1},
  pages={8994112},
  year={2022},
  publisher={Wiley Online Library}
}

@article{lam2018discrete,
  title={Discrete minimal surfaces: critical points of the area functional from integrable systems},
  author={Lam, Wai Yeung},
  journal={International Mathematics Research Notices},
  volume={2018},
  number={6},
  pages={1808--1845},
  year={2018},
  publisher={Oxford University Press}
}

@article{farouki2012bernstein,
  title={The {B}ernstein polynomial basis: A centennial retrospective},
  author={Farouki, Rida},
  journal={Computer Aided Geometric Design},
  volume={29},
  number={6},
  pages={379--419},
  year={2012},
  publisher={Elsevier}
}

@book{phillips2003,
  title={Interpolation and approximation by polynomials},
  author={Phillips, George},
  volume={14},
  year={2003},
  publisher={Springer Science \& Business Media}
}

@book{farin2014curves,
  title={Curves and surfaces for computer-aided geometric design: a practical guide},
  author={Farin, Gerald},
  year={2014},
  publisher={Elsevier}
}

@article{wang2018pso,
  title={Particle swarm optimization algorithm: an overview},
  author={Wang, Dongshu and Tan, Dapei and Liu, Lei},
  journal={Soft computing},
  volume={22},
  number={2},
  pages={387--408},
  year={2018},
  publisher={Springer}
}

@article{zhang2015pso,
  title={A comprehensive survey on particle swarm optimization algorithm and its applications},
  author={Zhang, Yudong and Wang, Shuihua and Ji, Genlin},
  journal={Mathematical problems in engineering},
  volume={2015},
  number={1},
  pages={931256},
  year={2015},
  publisher={Wiley Online Library}
}

@article{jain2022pso,
  title={An overview of variants and advancements of {PSO} algorithm},
  author={Jain, Meetu and Saihjpal, Vibha and Singh, Narinder and Singh, Satya Bir},
  journal={Applied Sciences},
  volume={12},
  number={17},
  pages={8392},
  year={2022},
  publisher={MDPI}
}

@article{gad2022pso,
  title={Particle swarm optimization algorithm and its applications: A systematic review.},
  author={Gad, Ahmed},
  journal={Archives of computational methods in engineering},
  volume={29},
  number={5},
  year={2022}
}

@article{nayak202325,
  title={25 years of particle swarm optimization: Flourishing voyage of two decades},
  author={Nayak, Janmenjoy and Swapnarekha and Naik, Bighnaraj and Dhiman, Gaurav and Vimal},
  journal={Archives of Computational Methods in Engineering},
  volume={30},
  number={3},
  pages={1663--1725},
  year={2023},
  publisher={Springer}
}

@article{colding2011minimal,
  title={Minimal surfaces and mean curvature flow},
  author={Colding, Tobias and Minicozzi, William P},
  journal={arXiv preprint arXiv:1102.1411},
  year={2011}
}

@incollection{crane2013digital,
  title={Digital geometry processing with discrete exterior calculus},
  author={Crane, Keenan and De Goes, Fernando and Desbrun, Mathieu and Schr{\"o}der, Peter},
  booktitle={ACM SIGGRAPH 2013 Courses},
  pages={1--126},
  year={2013}
}

\end{document}